\newtheorem{theorem}{Theorem}[section]
\newtheorem{lemma}[theorem]{Lemma}
\newtheorem{proposition}[theorem]{Proposition}
\newtheorem{observation}[theorem]{Observation}
\theoremstyle{definition}
\newtheorem{definition}[theorem]{Definition}
\newtheorem{example}[theorem]{Example}
\theoremstyle{remark}
\numberwithin{equation}{section}
\newcommand{\GL}{\operatorname{GL}}
\newcommand{\SL}{\operatorname{SL}}
\newcommand{\module}{\operatorname{mod}}
\newcommand{\Rep}{\operatorname{Rep}}
\newcommand{\Hom}{\operatorname{Hom}}
\newcommand{\Aut}{\operatorname{Aut}}
\newcommand{\End}{\operatorname{End}}
\newcommand{\Ext}{\operatorname{Ext}}
\newcommand{\M}{\operatorname{M}}
\newcommand{\irr}{\operatorname{irr}}
\newcommand{\mb}[1]{\mathbb{#1}}
\newcommand{\mf}[1]{\mathfrak{#1}}
\newcommand{\op}[1]{\operatorname{#1}}
\newcommand{\SI}{\operatorname{SI}}
\newcommand{\SIR}{\operatorname{SIR}}
\newcommand{\innerprod}[1]{\langle#1\rangle}
\newcommand{\varsm}[1]{\left[\begin{smallmatrix}#1\end{smallmatrix}\right]}
\newcommand{\sm}[1]{\left(\begin{smallmatrix}#1\end{smallmatrix}\right)}
\begin{document}

\title{Constructing Coherently G-invariant Modules}
\author{Jiarui Fei}
\address{National Center for Theoretical Sciences, Taipei 10617, Taiwan}
\email{jrfei@ncts.ntu.edu.tw}
\thanks{}

\subjclass[2010]{Primary 16S40; Secondary 16S35,16G20,13A50}

\date{}
\dedicatory{}
\keywords{Quiver Representation, G-invariant Representation, Hopf Action, Skew Group Algebra, Smash Product, Schur Algebra, Morita Equivalence, Semi-invariant, Tensor Invariants}

\begin{abstract} Let $G$ be a reductive group acting on a path algebra $kQ$ as automorphisms. We assume that $G$ admits a graded polynomial representation theory, and the action is polynomial. We describe the quiver $Q_G$ of the smash product algebra $kQ\# k[\M_G]^*$, where $\M_G$ is the associated algebraic monoid of $G$. We use $Q_G$-representations to construct coherently $G$-invariant modules of $Q$.
As an application, we construct algebraic semi-invariants on the quiver representation spaces from those $G$-invariant modules.
\end{abstract}

\maketitle

\section*{Introduction}

Let $k$ be a field of characteristic 0, and $A$ be a finite-dimensional $k$-algebra with a finite group $G$ acting as automorphisms. Then we can form the skew group algebra $AG:=A\# k[G]$, which is a well-studied subject (e.g., \cite{RR}). $AG$ and $A$ have the same representation type and global dimension.
If the algebra is the path algebra of a finite quiver $Q$, and the action permutes the set of primitive idempotents and stabilizes the arrow span $kQ_1$, then the quiver $Q_G$ of $kQG$ can be explicitly described \cite{Dt, L} (see Section \ref{ss:Q_G_finite}).

A natural question is that if $G$ is a reductive group acting rationally on $A$ as automorphisms, what is a good analogue of the skew group algebra? One natural answer can be replacing the group algebra by the Hopf algebra $k[G]$, and forming the smash product $A\# k[G]^*$. However, the dual coordinate algebra $k[G]^*$ is not semisimple, and quite complicated in general. To describe the quiver of $kQ\# k[G]^*$ is a rather difficult task. So we consider the coordinate (bi)algebra of the {\em associated monoid} $k[\M_G]$ as an alternative. If $G$ admits a {\em graded polynomial representation theory} (Definition \ref{D:gradedpoly}), then $k[\M_G]^*$ is semisimple. So the price is that we need to restrict to a special class of reductive groups and require the action to be polynomial.
Then we can explicitly describe the quiver $Q_G$ of $kQ[\M_G]^*:=kQ\#k[\M_G]^*$. The quiver is possibly an infinite quiver, but each connected component is still finite-dimensional (Proposition \ref{P:component}). Theorem \ref{T:Q_G} is our first main result. The proof is similar to that in \cite{Dt}.

Let us come back to the finite group action. The action of $G$ on $A$ induces an action of $G$ on the category of (left) $A$-modules.
We write this induced action in the exponential form, that is, ${^g M}$ is the module $M$ with the action of $A$ twisted by $g$:
$$am=(g^{-1}a)m.$$
An $A$-module $M$ is called {\em $G$-invariant} if ${^g M}\cong M$ for any $g\in G$.
The restriction of an $AG$-module $M$ is a $G$-invariant $A$-module. The converse is almost true (Lemma \ref{L:AG-mod}) but false in general.
Those $kQ$-modules admitting a $kQG$-module structure are of our main interest. In fact, we only need something weaker called {\em proj-coherently $G$-invariant} (Definition \ref{D:G-invariant}). They contain all exceptional modules of $G$-stable dimension vectors (Observation \ref{ob:rigidSchur}).
To construct such $kQ$-modules, we need to concretely describe the Morita equivalence functor $kQ_G$-$\module\to kQG$-$\module$ composed with the restriction functor $kQG$-$\module\to kQ$-$\module$. This can be done as long as we can compute a complete set of primitive orthogonal idempotents of the group algebra $k[G]$ (see Section \ref{ss:functors}).

All above about finite group actions have analogue for our $kQ[\M_G]^*$. However, in this case $Q_G$ is possibly an infinite quiver, so it is quite impossible to completely describe the above functor. So we fix some connected component $Q_c$ of $Q_G$, then we can describe the analogous functor $kQ_c$-$\module\to kQ$-$\module$, provided we can compute a complete set of primitive orthogonal idempotents of some homogenous subalgebra $S_c$ of $k[\M_G]^*$ depending on $Q_c$. Such subalgebra $S_c$ is a finite direct product of {\em Schur algebras} of $G$.

Our motivation comes from constructing algebraic semi-invariants on the quiver representation spaces.
For some dimension vector $\alpha$, let $\Rep_\alpha(Q)$ be the space of all $\alpha$-dimensional representations of $Q$.
The product of general linear group $\GL_\alpha:=\prod_{v\in Q_0}\GL_{\alpha(v)}$ acts on $\Rep_\alpha(Q)$ by the natural base change.
In \cite{S1}, Schofield introduced for each representation $N\in\Rep_\beta(Q)$ with $\innerprod{\alpha,\beta}_Q=0$, a semi-invariant function $c_N\in k[\Rep_\alpha(Q)]$ for the above action. Here $\innerprod{-,-}_Q$ is the Euler form of $Q$.
In fact, $c_N$'s span the space of all semi-invariants of weight $\innerprod{-,\beta}_Q$ over the base field $k$ \cite{DW1,SV}.

The action of $G$ on $kQ$ induces $G$-actions on all representation spaces of $Q$. An easy observation is that if $N$ is proj-coherently $G$-invariant, then $c_N$ is also semi-invariant under $G$-action.
This observation allows us to construct new semi-invariants for the $\GL_\alpha \times G$-action on $k[\Rep_\alpha(Q)]$.
We are particularly interested in the setting of $n$-arrow Kronecker quivers $K_n$, where $G=\GL_n$ acting on the space of arrows.
The $(\alpha_1,\alpha_2)$-dimensional representation space of $K_n$ can be identified with the (tri-)tensor space $U^*\otimes V\otimes W^*$,
where $\dim (U,V,W) = (\alpha_1, \alpha_2, n)$.
To illustrate our method, we construct several such semi-invariants in Propositions \ref{P:32}, \ref{P:42}, \ref{P:33}, and \ref{P:233}. Proposition \ref{P:32} may be well-known, but we believe that the rest are new.

We hope to find the dimension of the linear span of semi-invariants of form $c_N$, where $N$ is coherently $G$-invariant of fixed dimension.
Theorem \ref{T:SIadj} converts this problem to a similar problem on the quiver $Q_c$. As we will see, when $Q_c$ is simple, the dimension can be easily calculated.

\subsection*{Notations and Conventions}
Our vectors are exclusively row vectors.
If an arrow of a quiver is denoted by a lowercase letter, then we use the same capital letter for its linear map of a representation.
For direct sum of $n$ copy of $M$, we write $nM$ instead of the traditional $M^{\oplus n}$. Unadorned $\Hom$ and $\otimes$ are all over the base field $k$, and the superscript $*$ is the trivial dual.

\section{Finite Group Action}
Let $k$ be a field of characteristic 0, and $G$ be a finite group acting on a $k$-algebra $A$ as automorphisms.
The group algebra $k[G]$ is a Hopf algebra with counit, comultiplication, and antipode defined by the linear extension of
$$\epsilon(g)=1,\quad \Delta(g)=g\otimes g,\quad S(g)=g^{-1}.$$
In this way, $A$ obtains a $k[G]$-module algebra structure.
For an element $c$ in a coalgebra, we use Sweedler's notation for comultiplication and coaction throughout. For example, we abbreviate $\Delta(c)=\sum_i c_{(0)}^{(i)}\otimes c_{(1)}^{(i)}$ to $\Delta(c)=\sum c_{(0)}\otimes c_{(1)}$.

\begin{definition} Let $B$ be a bialgebra. A (left) {\em $B$-module algebra} $A$ is an algebra which is a (left) module over $B$ such that
for any $b\in B, a,a'\in A$,
$$b 1_A = \epsilon(b) 1_A,\quad \text{ and }\quad b\cdot (aa')=\sum(b_{(0)}\cdot a)(b_{(1)}\cdot a').$$
The {\em smash product algebra} $A\#B$ is the vector space $A\otimes B$ with the product
$$(a\otimes b)(a'\otimes b'):= \sum_{}a(b_{(0)}\cdot a') \otimes b_{(1)}b'.$$
\end{definition}

When $B=k[G]$ is a group algebra, we may abuse of notation writing $a$ for $a\otimes 1_G$ and $b$ for $1_A\otimes b$.
In this context, $a\otimes b$ can be written as $ab$, and thus $A\# k[G]$ may be denoted by $AG$.

The action of $G$ on $A$ induces an action of $G$ on the category of (left) $A$-modules.
We write this induced action in the exponential form, that is, ${^g M}$ is the module $M$ with the action of $A$ twisted by $g$:
$$am=(g^{-1}a)m.$$
For morphisms $f\in\Hom_A(M,N)$, we check that the following defines a morphism ${^g f}\in\Hom_A({^g M},{^g N})$
$${^g f}(m)=f(m).$$
If $M$ is an $A$-module then $(AG)\otimes_A M$ is isomorphic as an $A$-module to $\bigoplus_{g\in G} {^g M}$, where the action of $G$ permutes the factors.

We observe that an $AG$-module $M$ is an $A$-module which is also a $G$-module, and such that
\begin{equation} \label{eq:GA} g(am) = (ga)(gm). \end{equation}

\begin{lemma} \label{L:AG-mod} An $A$-module $M$ admits a structure of an $AG$-module if and only if there is a family of isomorphisms $\{i_g: M\to {^g M}\}_{g\in G}$ satisfying ${^g i_h} i_g=i_{hg}$ for any $g,h\in G$.
\end{lemma}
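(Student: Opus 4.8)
The plan is to prove the two implications separately, in each case simply unwinding the definitions; the bridge between them is the description recorded just before the statement, that an $AG$-module is the same data as an $A$-module $M$ which is simultaneously a $G$-module and satisfies \eqref{eq:GA}.

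For the forward direction I would start from an $AG$-module $M$ and restrict the action to the group-like elements $1_A\otimes g$ to get a left $G$-action on $M$. Then I would set $i_g\colon M\to M^g$, $i_g(m)=g^{-1}m$. This is a bijection with inverse $m\mapsto gm$, and it is a map of $A$-modules into $M^g$: applying \eqref{eq:GA} with $g^{-1}$ in place of $g$ gives $i_g(am)=g^{-1}(am)=(g^{-1}a)(g^{-1}m)$, and the last expression is by definition the action of $a$ on $i_g(m)$ inside $M^g$. So each $i_g$ is an isomorphism $M\to M^g$. Since $f^g$ and $f$ have the same underlying map of sets, the composite $i_h^g i_g$ is computed on underlying sets by iterating the $G$-action, and associativity of that action turns this into the cocycle identity $i_h^g i_g=i_{hg}$.

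For the converse I would take a family $\{i_g\colon M\to M^g\}$ with $i_h^g i_g=i_{hg}$, note that $g=h=1$ forces $i_1$ to be an idempotent automorphism and hence $i_1=\Id$, and then define a $G$-action on the underlying vector space of $M$ by $g\cdot m:=i_{g^{-1}}(m)$, reading $i_{g^{-1}}$ as a linear self-map of the set $M$. The cocycle relation (again using $f^g=f$ on underlying sets) is precisely what is needed to make this a genuine left action of $G$, and the $A$-linearity of $i_{g^{-1}}$ viewed as a map $M\to M^{g^{-1}}$ unwinds to $i_{g^{-1}}(am)=(ga)\,i_{g^{-1}}(m)$, i.e.\ exactly \eqref{eq:GA}. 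Thus $M$ acquires the structure of an $A$-module that is also a $G$-module satisfying \eqref{eq:GA}, hence of an $AG$-module by the observation above; and I would finish by checking that the passages $M\rightsquigarrow\{i_g\}$ and $\{i_g\}\rightsquigarrow M$ are mutually inverse, which is immediate from the formulas $i_g(m)=g^{-1}m$ and $g\cdot m=i_{g^{-1}}(m)$.

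I expect no real difficulty beyond bookkeeping with the twist: one must be careful that the $g$ versus $g^{-1}$ is placed so that $i_g$ genuinely lands in $M^g$ rather than $M^{g^{-1}}$, and that the source and target of $i_h^g$ are exactly the ones making $i_h^g i_g$ composable with the asserted target, so that the cocycle identity both typechecks and corresponds to the associativity of the reconstructed $G$-action. Conceptually this lemma is the module-level shadow of the statement that $AG$-modules are descent data for the $G$-action (equivalently, modules over the associated transformation groupoid), and the argument above is just that equivalence spelled out by hand.
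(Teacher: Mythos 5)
Your proof is correct and follows essentially the same route as the paper's: the forward direction via $i_g(m)=g^{-1}m$, and the converse by reconstructing the $G$-action from the $i_g$'s using that $M$ and $M^g$ share the same underlying vector space (so that $i_h^g=i_h$ as set maps). The one wrinkle you rightly flag is real but inherited from the statement itself: with these conventions $(M^h)^g=M^{gh}$ and the composite $i_h^g i_g$ sends $m\mapsto (gh)^{-1}m$, i.e.\ equals $i_{gh}$ rather than $i_{hg}$, and your reconstruction $g\cdot m=i_{g^{-1}}(m)$ is precisely the one that makes this corrected cocycle yield a left $G$-action satisfying \eqref{eq:GA}.
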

\begin{proof}
If $M$ is an $AG$-module, then \eqref{eq:GA} says that the assignment $m\mapsto g^{-1}m$ defines an isomorphism $i_g:M\to {^g M}$.
Conversely, if we have a family of isomorphisms $\{i_g: M\to {^g M}\}_{g\in G}$ satisfying ${^g i_h} i_g=i_{hg}$ for any $g,h\in G$, then we can endow $M$ with a $G$-module structure as follows.
Note that $M$ and ${^g M}$ have the same underlying vector space on which ${^g i_h}=i_h$, so we can define a $G$-action on $M$ satisfying \eqref{eq:GA} by $g(m)=i_g(m)$.
\end{proof}

\begin{definition} \label{D:G-invariant} An $A$-module $M$ is called {\em $G$-invariant} if ${^g M}\cong M$ for any $g\in G$.
It is called {\em proj-coherently $G$-invariant} if there is a family of isomorphisms $\{i_g: M\to {^g M}\}_{g\in G}$ satisfying that $\forall g,h\in G, \exists c\in k^*$ such that ${^g i_h} i_g=c\cdot i_{hg}$.
It is called {\em coherently $G$-invariant} if it admits a $AG$-module structure.
A (coherently) $G$-invariant $A$-module is called {\em (coherently) $G$-indecomposable} if it is not a direct sum of two (coherently) $G$-invariant modules.
\end{definition}
For our main application on invariant theory, we are more interested in \text{(proj-)} coherently $G$-invariant modules.
In general, being $G$-invariant is strictly weaker than being coherently $G$-invariant.
However, when $G$ is cyclic and $A$ a path algebra, Gabriel \cite{G} proved that they are equivalent.

\begin{observation} \label{ob:rigidSchur} Let $A=kQ$ be the path algebra of a finite quiver $Q$, and $\alpha$ be a $G$-stable dimension vector.
\begin{enumerate}
\item A {\em rigid} $\alpha$-dimensional representation of $Q$ is $G$-invariant.
\item A $G$-invariant {\em Schur representation} of $Q$ is proj-coherently $G$-invariant.
\item If the cohomology group $H^2(G;k^*)$ vanishes, then proj-coherent is equivalent to coherent.
\end{enumerate} \end{observation}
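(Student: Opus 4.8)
The plan is to verify the three statements in turn, using only the definitions of rigid/Schur representation, the notion of $G$-invariance, and the characterization of coherence from Lemma~\ref{L:AG-mod} together with Definition~\ref{D:G-invariant}. Throughout, for $M\in kQ$-$\module$ and $g\in G$ the twisted module $M^g$ has the same dimension vector as $M$ (since the $G$-action stabilizes the idempotents), and $M\mapsto M^g$ is an exact autoequivalence of $kQ$-$\module$.

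For part~(1): if $M$ is rigid, i.e.\ $\Ext^1_{kQ}(M,M)=0$, then since $g$ induces an autoequivalence we also have $\Ext^1_{kQ}(M^g,M^g)=0$, so $M^g$ is rigid of the same dimension vector as $M$. Over a path algebra a rigid module is determined up to isomorphism by its dimension vector (the orbit of a rigid representation is dense in its irreducible representation space), hence $M^g\cong M$ for all $g$, and $M$ is $G$-invariant.

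For part~(2): let $M$ be a $G$-invariant Schur representation, so $\End_{kQ}(M)=k$. Choose for each $g\in G$ some isomorphism $i_g\colon M\to M^g$ (possible by $G$-invariance). For $g,h\in G$, both $i_h^g i_g$ and $i_{hg}$ are isomorphisms $M\to M^{hg}$; composing the first with the inverse of the second gives an automorphism of $M$, which by the Schur property is a nonzero scalar. Hence $i_h^g i_g = c\cdot i_{hg}$ for some $c\in k^*$, which is exactly the definition of proj-coherently $G$-invariant. (One may also normalize $i_e=\Id$, but this is not needed.)

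For part~(3): suppose $M$ is proj-coherently $G$-invariant, with a family $\{i_g\colon M\to M^g\}$ satisfying $i_h^g i_g = c(h,g)\, i_{hg}$ for scalars $c(h,g)\in k^*$. The associativity of composition, applied to $i_k^{hg} i_h^g i_g$ in two ways, shows that $c\colon G\times G\to k^*$ is a $2$-cocycle, so it defines a class $[c]\in H^2(G;k^*)$ (with trivial $G$-action on $k^*$, which matches the setup). If this group vanishes, write $c(h,g)=\lambda(hg)\lambda(h)^{-1}\lambda(g)^{-1}$ (possibly after rearranging indices to the standard cochain convention) for some function $\lambda\colon G\to k^*$, and replace $i_g$ by $i_g':=\lambda(g)^{-1} i_g$; the rescaled family satisfies $(i_h')^g i_g' = i_{hg}'$ exactly. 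By Lemma~\ref{L:AG-mod} this endows $M$ with a $kQG$-module structure, i.e.\ $M$ is coherently $G$-invariant. The converse implication is immediate from the definitions.

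The main obstacle is bookkeeping rather than conceptual: in part~(3) one must match the cocycle identity coming from associativity of the $i_g$'s (which is written multiplicatively and with the ``twisted'' composition $i_h^g$) to the standard bar-resolution convention for $H^2(G;k^*)$, and check that the relevant $G$-action on the coefficients $k^*$ is trivial — which it is, because the scalars arise from $\End_{kQ}(M^{hg})=k$ on which $G$ acts trivially. In part~(1) the only point needing care is the fact that rigid representations over $kQ$ are rigid in families and hence unique up to isomorphism in their dimension vector; this is standard (e.g.\ via upper semicontinuity of $\dim\Ext^1$ and $\dim\Hom$) and can be cited.
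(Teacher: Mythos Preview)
Your proof is correct and follows essentially the same approach as the paper. The only cosmetic difference is in part~(3): the paper phrases the argument as ``every projective representation $G\to\GL_\alpha/k^*$ lifts to $G\to\GL_\alpha$ when $H^2(G;k^*)=0$'', while you unwind this into the equivalent cocycle/coboundary computation; your version is simply more explicit.
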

\begin{proof} By definition $M$ is rigid if $\Ext_Q^1(M,M)=0$. So the orbit of $M$ is dense in the $\alpha$-dimensional representation space, which is irreducible.
But ${^g M}$ is rigid as well, so they have to be in the same orbit.

By definition, $M$ is Schur if $\Hom_Q(M,M)=k$. So the statement follows from the definition.

If $H^2(G;k^*)=0$, then every projective representation $G\to \GL_\alpha/k^*$ lifts to $G\to \GL_\alpha$. So we can modify each $i_g$ by some scalar factor such that ${^g i_h} i_g=i_{hg}$.
\end{proof}

\begin{definition} A dimension vector $\alpha$ of $Q$ is called a {\em $G$-root} if there is an $\alpha$-dimensional coherently $G$-indecomposable representation.
It is called a {\em strong $G$-root} if there is an indecomposable coherently $G$-invariant module.
\end{definition}

When $G$ is cyclic, all $G$-roots can be described in terms of the root system of associated valued quiver \cite{H}.
The following lemma is well-known.
\begin{lemma} \label{L:same} For any finite-dimensional algebra $A$, $AG$ and $A$ have the same global dimension and representation type.
\end{lemma}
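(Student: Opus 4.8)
The plan is to reduce both assertions to standard homological facts about the smash product $AG = A\#k[G]$, exploiting that $\operatorname{char} k = 0$ so that $k[G]$ is semisimple and $A$ is a direct summand of $AG$ as an $(A,A)$-bimodule. First, for the global dimension, I would recall that $AG$ is free of finite rank $|G|$ as both a left and right $A$-module, so the restriction functor $AG\text{-}\module\to A\text{-}\module$ and the induction functor $AG\otimes_A(-)$ are both exact and send projectives to projectives; moreover every $AG$-module $M$ is a direct summand of $AG\otimes_A(\operatorname{res}M)\cong\bigoplus_{g\in G}M^g$, the averaging argument being available precisely because $|G|$ is invertible in $k$. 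From exactness of $\operatorname{res}$ together with the fact that $M$ is a summand of an induced module one gets $\operatorname{pd}_{AG}M\le\operatorname{pd}_A(\operatorname{res}M)$; conversely $\operatorname{pd}_A(\operatorname{res}M)\le\operatorname{pd}_{AG}M$ since restriction of a projective $AG$-resolution is a projective $A$-resolution. Taking suprema over all modules yields $\operatorname{gl.dim}AG=\operatorname{gl.dim}A$.

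For representation type, I would argue that $A$ is of finite (resp. tame) representation type if and only if $AG$ is. The key point is again the pair of exact functors $F=AG\otimes_A(-)\colon A\text{-}\module\to AG\text{-}\module$ and its restriction right adjoint $\operatorname{res}$, satisfying $\operatorname{res}\circ F\cong\bigoplus_{g\in G}(-)^g$ and, by a symmetric computation, $F\circ\operatorname{res}\,M\cong\bigoplus_{g\in G}M^g$ with $M$ a direct summand of the latter. Thus every indecomposable $AG$-module is a summand of $F(N)$ for some indecomposable $A$-module $N$, and $F$ carries a $d$-dimensional module to one of dimension $|G|\cdot d$, so each indecomposable $A$-module of dimension $d$ accounts for only finitely many indecomposable $AG$-modules, all of dimension $\le|G|d$; symmetrically each indecomposable $A$-module is a summand of $\operatorname{res}$ of an indecomposable $AG$-module. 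Counting indecomposables in each dimension (or parametrizing families, in the tame case) then transfers finiteness/tameness in both directions; wildness is handled by the same dictionary, or simply by elimination since the type of a finite-dimensional algebra is one of finite, tame, or wild.

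The step I expect to require the most care is the representation-type equivalence in the tame case: one must check that the one-parameter families of indecomposables for $A$ and for $AG$ correspond under $F$ and $\operatorname{res}$ in a way that preserves the number of families in each dimension, i.e. that $F$ does not collapse or create spurious continuous families. This follows because $F$ and $\operatorname{res}$ are exact, additive, and mutually "almost inverse" up to the finite direct sum $\bigoplus_{g\in G}(-)^g$, so they induce morphisms of varieties of modules that are finite-to-one on isomorphism classes; but making this precise requires the geometric description of tameness rather than a purely module-theoretic one. Since the lemma is classical I would simply cite the standard references (e.g. \cite{RR}) for this part, and record the global-dimension argument in full as above.
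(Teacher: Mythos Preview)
Your proposal is correct and is precisely the standard argument from \cite{RR}; note, however, that the paper does not actually prove this lemma---it simply records it as ``well-known'' and moves on, so there is no proof in the paper to compare against. Your sketch is exactly what the implicit citation points to, and your caveat about the tame case (handled geometrically in \cite{RR}) is apt.
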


\subsection{Description for $Q_G$} \label{ss:Q_G_finite}
By Lemma \ref{L:same}, $kQG$ is Morita equivalent to some hereditary algebra $kQ_G$.
There are algorithms to find the quiver $Q_G$ if the action permutes the set of primitive idempotents and stabilizes the arrow span $kQ_1$.
Let us recall the methods in \cite{Dt,L}.

Let $\tilde{Q}_0$ be a set of representatives of class of $Q_0$ under the action of $G$. For $u\in Q_0$, let $O_u$ be the orbit of $u$ and $G_u$ be the subgroup of $G$ stabilizing $e_u$.

For $(u,v)\in \tilde{Q}_0\times \tilde{Q}_0$, $G$ acts diagonally on the product of the orbits $O_u\times O_v$.
A set of representatives of the classes of this action will be denoted by $O_{uv}$.
We define $R_{uv}:=kQ(u,v)$ to be the vector space spanned by the arrows from $u$ to $v$. We regard $R_{uv}$ as a right $k[G_{uv}]:=k[G_u\cap G_v]$-module by restricting the action of $G$.

Let $\irr(G)$ denote the set of all irreducible representations of $G$. The vertex set of $Q_G$ is $$\bigcup_{v\in \tilde{Q}_0} \{u\}\times \irr(G_u).$$
The arrow set from $(u,\rho)$ to $(v,\sigma)$ is a basis of
$$\bigoplus_{(u',v')\in O_{uv}} \Hom_{k[G_{u'v'}]} (V_\rho, R_{u'v'}\otimes V_\sigma).$$
Here $\rho$ should be understood as a representation of $G_{u'}$ as follows.
Let $g_{uu'}$ be such that $g_{uu'} u = u'$, then $\rho(h)=\rho(g_{uu'}^{-1}hg_{uu'})$ for $h\in G_{u'}$. Similar identification makes $\sigma$ a representation of $G_{v'}$.

The proof uses the following idempotent $e$ of $kQG$, which will be used later.
Let $R$ be the maximal semisimple subalgebra of $kQ$.  Let $e_0=\sum_{u\in \tilde{Q}_0} e_u \in R\subset RG$. It is not hard to see that $e_0 (kQG) e_0$ is Morita equivalent to $kQG$, and $e_0 (RG) e_0\cong \prod_{u\in\tilde{Q}_0} k[G_u]$.
Since each $G_u$ is semi-simple, we can fix for each $u\in \tilde{Q}_0$ and $\rho\in\irr(G_u)$, a primitive idempotent $e_{u\rho}$ of $k[G_u]$ corresponding to $\rho$. Let
$$e=\sum_{u\in\tilde{Q}_0}\sum_{\rho\in\irr(G_u)} e_{u\rho}.$$
It is proved in \cite{Dt} that $e(kQG)e$ is a basic algebra Morita equivalent to $kQG$.

\subsection{Functors} \label{ss:functors}
Let $A:=kQ$ and $B:=kQ_G$. The functor $AG\otimes_A -$ has the restriction functor as its right adjoint.
The Morita equivalence functor $e(-)$ has $R_e:=\Hom_B(eAG,-)$ as its right adjoint.
So the composition $T:=e(AG\otimes_A -)$ has a right adjoint $R:=\op{res}\circ R_e$.
Note that $T$ is exact and preserves projective presentations, and thus $R$ preserves injective presentations.
Moreover, both $T$ and $R$ map semisimple modules to semisimple modules \cite[Theorem 1.3]{RR}.

The functor $AG\otimes_A -$ is also right adjoint to the restriction functor \cite[Theorem 1.2]{RR}. So $T$ also has a left adjoint $L:=\op{res}\circ AGe\otimes_B -$. However, in these notes we will exclusively work with the functor $R$.

Now we have the following diagram of functors
$$\vcenter{\xymatrix@=9ex{ & \module AG  \ar@<.5ex>[dl]^{\op{res}} \ar@<.5ex>[dr]^{e(-)} &  \\
\module A \ar@<.5ex>[ur]^{AG\otimes_A -}  & & \module B \ar@<.5ex>[ul]^{R_e} \ar[ll]^{R}
}}$$

By our construction, the functor $R$ sends the simple $S_{u\rho}$ corresponding to the vertex $e_{u\rho}$ to
the semisimple representation $\bigoplus_{v\in O_u} \dim(V_\rho) S_{v}$ of $Q$.
In this way, $R$ induces a linear map $r:K_0(B) \to K_0(A)$.
Since $R_e$ is an equivalence and preserves indecomposables, it follows that
\begin{proposition} $\alpha$ is a $G$-root if and only if there is a root $\beta$ of $Q_G$ such that $r(\beta)=\alpha$.
\end{proposition}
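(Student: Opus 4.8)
The plan is to transport the statement across the Morita equivalence $e(-)$ and then read it off from the description of the indecomposable $kQ_G$-modules.

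First I would rephrase what it means to be a $G$-root. By Lemma~\ref{L:AG-mod} a $kQ$-module is coherently $G$-invariant exactly when it underlies some $kQG$-module, and any direct-sum decomposition of a $kQG$-module restricts to a decomposition of the underlying $kQ$-module into coherently $G$-invariant summands; conversely a coherently $G$-invariant module that is not a direct sum of two such modules carries an \emph{indecomposable} $kQG$-module structure. So I would work with the reformulation: $\alpha$ is a $G$-root if and only if there is an indecomposable $kQG$-module $M$ with $\underline{\dim}\,\op{res}(M)=\alpha$. Matching this dictionary carefully against Definition~\ref{D:G-invariant} is the step I expect to need the most care; everything after it is formal.

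Next I would bring in the Morita equivalence. Since $e(-)\colon kQG\text{-}\module\to kQ_G\text{-}\module$ is an equivalence with quasi-inverse $R_e$, the assignments $M\mapsto eM$ and $N\mapsto R_e(N)$ are mutually inverse bijections between isomorphism classes of indecomposable $kQG$-modules and of indecomposable $kQ_G$-modules. On the $kQ_G$-side, $kQG$ is finite-dimensional of finite global dimension by Lemma~\ref{L:same}, so $Q_G$ is a finite acyclic quiver and, by Kac's theorem, the dimension vectors of its indecomposable representations are precisely the positive roots of $Q_G$ (if one takes this as the definition of a root of $Q_G$, there is nothing to prove here). Finally $R=\op{res}\circ R_e$ is exact, being a composite of an equivalence with a restriction functor, so it induces a map on Grothendieck groups; under the identification of $K_0$ of each module category with the corresponding lattice of dimension vectors this induced map is the map $r$, so $\underline{\dim}\,R(N)=r(\underline{\dim}\,N)$ for every $kQ_G$-module $N$.

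It then remains to assemble these. If $\alpha$ is a $G$-root, pick an indecomposable $kQG$-module $M$ with $\underline{\dim}\,\op{res}(M)=\alpha$, set $N:=eM$ and $\beta:=\underline{\dim}\,N$; then $N$ is indecomposable, so $\beta$ is a root of $Q_G$, and $R_e(N)\cong M$ gives $r(\beta)=\underline{\dim}\,R(N)=\underline{\dim}\,\op{res}(M)=\alpha$. Conversely, given a root $\beta$ of $Q_G$, pick an indecomposable $kQ_G$-module $N$ with $\underline{\dim}\,N=\beta$ and set $M:=R_e(N)$; then $M$ is an indecomposable $kQG$-module and $\underline{\dim}\,\op{res}(M)=\underline{\dim}\,R(N)=r(\beta)=\alpha$, so $\alpha$ is a $G$-root.
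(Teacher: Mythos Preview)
Your argument is correct and is exactly the paper's approach, only spelled out in full: the paper's entire justification is the single clause ``Since $R_e$ is an equivalence and preserves indecomposables,'' and you have unpacked the implicit identification of coherently $G$-indecomposable $kQ$-modules with indecomposable $kQG$-modules, the passage through the Morita equivalence $e(-)$, and the fact that $r$ is the map induced by the exact functor $R$ on $K_0$. Your caution about matching Definition~\ref{D:G-invariant} with ``indecomposable $kQG$-module'' is well placed---that is the one point where the paper is terse---but your reading agrees with what the paper needs for the statement to hold.
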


We want to give a concrete description for the functor $R$. To be more precise, we want to lift $R$ to a map between representation spaces of $Q_G$ and $Q$.
Clearly, such a description relies on the choice of a complete set of primitive orthogonal idempotents of $k[G_u]$ for each $u\in\tilde{Q}_0$.
In general, no explicit formula for primitive orthogonal idempotents in a finite group algebra is known. However, in many special cases, for example when the group is a symmetric group, a complete set of primitive orthogonal idempotents is given by the Young symmetrizers \eqref{eq:Young} \cite[9.3]{GW}.

Assume that we have got a complete set $I$ of primitive orthogonal idempotents of $k[G_u]$ for each $u\in\tilde{Q}_0$.
By Maschke's Theorem, $k[G_u]$ is a product of matrix algebras $\prod_{\rho\in\irr(G_u)} \End(V_\rho)$.
We can compute a standard basis $\{e^{ij}_{u\rho}\}$ of the matrix algebra $\End(V_\rho)$ such that $\{e^{ii}_{u\rho}\}\subset I$ and $e^{11}_{u\rho} = e_{u\rho}$.
We identify a basis of $\{e_{u\rho}^{1i} R_{uv} e_{v\sigma}^{j1}\}$ with some arrows from $(u,\rho)$ to $(v,\sigma)$, say $\{b_k\}_k$.
Now for each $a\in R_{uv}$, $\{e_{u\rho}^{ii} a e_{v\sigma}^{jj}\}$ is a linear combination of $e_{u\rho}^{i1} b_k e_{v\sigma}^{1j}$'s.
Say $e_{u\rho}^{ii} a e_{v\sigma}^{jj} = \sum c_k^{ij} e_{u\rho}^{i1} b_k e_{v\sigma}^{1j}$.

For any $N\in\Rep_\beta(Q_G)$, $M=R(N)\in\Rep_{r(\beta)}(Q)$ is the following representation. The vector space $M_u$ attached to the vertex $u$ is
$$M_u = \bigoplus_{\rho\in \irr(G_u)} d_\rho N_{u\rho}, \quad d_\rho=\dim (V_\rho).$$
Here, each copy of $N_{u\rho}$ corresponds to some $e^{ii}_{u\rho}$. Let us denote such a copy by $N_{u\rho}^i$.
The linear map from $N_{u\rho}^i$ to $N_{v\sigma}^j$ is given by substituting the arrows in $\sum_k c_k^{ij} b_k$ by corresponding matrices in $N$.
In particular, we see that such a lifting is a linear morphism $\Rep_{\beta}(Q_G)\to \Rep_{r(\beta)}(Q)$.

\begin{example} Let $S_n$ be the $n$-subspace quiver:
$$\Sn$$
The symmetric group $\mf{S}_n$ acts naturally on $S_n$.
In this way, we get an action of $\mf{S}_n$ on $kS_n$.
There are only two orbits on $Q_0$ represented by $n$ and $n+1$.
The stabilizers $G_n$ and $G_{n+1}$ are $\mf{S}_{n-1}$ and $\mf{S}_n$ respectively.
We have only one orbit in $O_n\times O_{n+1}$.
The irreducible representations of $S_n$ are indexed by partitions $\rho$, and primitive idempotents in $\End(V_\rho)$ can be labeled by Young tableaux $T$ of shape $\rho$:
\begin{equation} \label{eq:Young}
e_{T} = \kappa_\rho^{-1}\sum_{v \in V(T)}\sum_{h \in H(T)} \op{sgn}v \cdot vh.
\end{equation}
Here, $\kappa_\rho$ is the hook length of $\rho$, $V(T), H(T)$ are the vertical and horizontal subgroup corresponding to the Young tableaux $T$.
The number of arrows between $(n,\rho)$ and $(n+1,\sigma)$ is given by the multiplicity of $\rho$ in $\sigma$ restricted to $\mf{S}_{n-1}$.
This is equal to the Littlewood-Richardson coefficients $c_{\rho,(1)}^{\sigma}$, which can be computed by the Pieri rule.

For $n=4$, we get the following quiver for $B$
$$\skewSfour{\ydiagram{3}}{\ydiagram{2,1}}{\ydiagram{1,1,1}}{\ydiagram{4}}{\ydiagram{3,1}}{\ydiagram{2,2}}{\ydiagram{2,1,1}}{\ydiagram{1,1,1,1}}$$
The functor $R$ takes a representation of the above quiver to the following representation of $S_4$.
\begin{align*}
A_1&=\sm{B_1 & B_2 & -B_2 & -B_2 & 0 & 0 & 0 & 0 & 0 & 0 \\ 0 & B_3 & 0 & B_3 & B_4 & 0 & B_5 & -2B_5 & B_5 & 0 \\ 0 & 0 & B_3 & -B_3 & -B_4 & -B_4 & B_5 & B_5 & -2B_5 & 0 \\ 0 & 0 & 0 & 0 & 0 & 0 & B_6 & B_6 & B_6 & B_7}\\
A_2&=\sm{B_1 & B_2 & -B_2 & 3B_2 & 0 & 0 & 0 & 0 & 0 & 0 \\ 0 & B_3 & 0 & 0 & B_4 & B_4 & B_5 & 3B_5 & 0 & 0 \\ 0 & 0 & B_3 & 0 & -B_4 & 0 & B_5 & 0 & 3B_5 & 0 \\ 0 & 0 & 0 & 0 & 0 & 0 & B_6 & 0 & 0 & -B_7}\\
A_3&=\sm{B_1 & B_2 & 3B_2 & -B_2 & 0 & 0 & 0 & 0 & 0 & 0 \\ 0 & B_3 & 0 & 0 & -B_4 & -B_4 & -3B_5 & -B_5 & 0 & 0 \\ 0 & 0 & 0 & B_3 & 0 & B_4 & 0 & -B_5 & -3B_5 & 0 \\ 0 & 0 & 0 & 0 & 0 & 0 & 0 & -B_6 & 0 & -B_7}\\
A_4&=\sm{B_1 & -3B_2 & -B_2 & -B_2 & 0 & 0 & 0 & 0 & 0 & 0 \\ 0 & 0 & -B_3 & 0 & -B_4 & 0 & 3B_5 & 0 & B_5 & 0 \\ 0 & 0 & 0 & B_3 & 0 & -B_4 & 0 & 3B_5 & B_5 & 0 \\ 0 & 0 & 0 & 0 & 0 & 0 & 0 & 0 & B_6 & B_7}.
\end{align*}
\end{example}

\begin{example} The symmetric group $\mf{S}_n$ also acts naturally on the $n$-arrow Kronecker quiver $K_n$
$$\Kn$$
The $\mf{S}_n$-representation on arrows decomposes into the standard representation $(n-1,1)$ and the trivial representation,
so the number of arrows between $(1,\rho)$ and $(2,\sigma)$ is given by $g_{\rho,(n-1,1)}^{\sigma}+\delta_{\rho,\sigma}$.
Here, $g_{\rho,\pi}^{\sigma}$ is the Kronecker coefficient defined by $V_\rho \otimes V_\pi = \bigoplus_{\sigma} g_{\rho,\pi}^{\sigma} V_\sigma$.
Readers can verify the following quivers $Q_G$ together with the functor $R$ for $n=2,3$.
$$\skewKtwo{\ydiagram{1,1}}{\ydiagram{2}}{\ydiagram{1,1}}{\ydiagram{2}}$$
\begin{align*}
A_1=\sm{B_1 & B_2 \\ B_3 & B_4},
A_2=\sm{B_1 & -B_2 \\ -B_3 & B_4}.
\end{align*}

$$\skewKthree{\ydiagram{1,1,1}}{\ydiagram{2,1}}{\ydiagram{3}}{\ydiagram{1,1,1}}{\ydiagram{2,1}}{\ydiagram{3}}$$
\begin{align*}
A_1=\sm{B_1 & B_2 & B_2 & 0 \\ B_3 & \frac{B_4+B_5}{2} & \frac{B_5-B_4}{2} & B_6 \\ -B_3 & \frac{B_4-B_5}{2} & -\frac{B_4+B_5}{2} & B_6 \\ 0 & B_7 & -B_7 & B_8 },
A_2=\sm{B_1 & B_2 & -2B_2 & 0 \\ 0 & B_4 & 0 & -2B_6 \\ B_3 & \frac{B_5-B_4}{2} & -B_5 & B_6 \\ 0 & -B_7 & 0 & B_8 },
A_3=\sm{B_1 & -2B_2 & B_2 & 0 \\ -B_3 & B_5 & \frac{B_4-B_5}{2} & B_6 \\ 0 & 0 & -B_4 & -2B_6 \\ 0 & 0 & B_7 & B_8 }.
\end{align*}
\end{example}

\section{Schur Algebras of Reductive Monoid}
In this section, we recall several results from \cite{Dy}. We keep our assumption that the base field $k$ has characteristic 0. Let $\M_n$ be the affine algebraic monoid of $n\times n$ matrices over $k$.
We naturally identify the coordinate algebra $k[\M_n]$ with the polynomial algebra ${A}(n):=k[X]$, where $X=\{x_{ij}\}_{1\leq i\leq j \leq n}$. The polynomial algebra is graded by the usual monomial degree ${A}(n)=\bigoplus_{d\geq 0} {A}(n,d).$
Moreover, ${A}(n)$ is a bialgebra with coalgebra structure maps $\Delta,\epsilon$ defined by
$$\Delta(x_{ij})=\sum_{k=1}^n x_{ik}\otimes x_{kj},\quad \epsilon(x_{ij})=\delta_{ij}.$$
Thus each graded piece ${A}(n,d)$ is a subcoalgebra of ${A}(n)$. Hence, its linear dual ${S}(n,d):={A}(n,d)^*$ is a finite-dimensional $k$-algebra, known as the classical Schur algebra.

The coordinate algebra of the general linear group $\GL_n$ is the localization of ${A}(n)$ at the determinant function:
$k[\GL_n]=k[X,\det(X)^{-1}]$.
Let $G$ be a reductive closed subgroup of $\GL_n$. By a polynomial function on $G$, we mean the restriction to $G$ of a polynomial function in ${A}(n)$. We denote by $A(G)$ the algebra of polynomial function on $G$. It inherits bialgebra structure from ${A}(n)$.
By $A(G,d)$ we denote the image of $A(n,d)$ under the restriction map from $\GL_n$ to $G$. It is a subcoalgebra of $A(G)$.
We denote the linear dual of $A(G,d)$ by $S(G,d)$. It is a subalgebra of ${S}(n,d)$ because $A(G,d)$ is a quotient of ${A}(n,d)$.

\begin{definition} \label{D:gradedpoly} We say that $G$ admits a {\em graded polynomial representation theory} if the sum $\sum_{d\geq 0} A(G,d)$ is direct.
\end{definition}

A standard non-example is $\SL_n$ because $A(\SL_n,0)\cap A(\SL_n,n)\neq \emptyset$ due to the equation $\det(X)=1$. It is not hard to see that if $G$ contains the nonzero scalar matrices $cI$ of $\GL_n$, then $G$ admits a graded polynomial representation theory.
This includes, for example $\op{GSp}_n$ and $\op{GO}_n$, the groups of symplectic and orthogonal similitudes.
Proposition \ref{P:criterionGPRT} provides another criterion.

A finite dimensional (left) $G$-module $V$ is called rational if for some basis
$v_1,\dots,v_n$ of $V$ the corresponding coefficient functions $f_{ij}$, defined by the equations
$$g\cdot v_i=\sum_{j=1}^n f_{ij}(g)v_j$$
belong to $k[G]$.
We then have on $V$ the structure of a right $k[G]$-comodule via the structure map $\Delta_V: V\to V\otimes k[G]$, given by $\Delta_V(v_i)=\sum_{j=1}^n v_j\otimes f_{ij}$. It is well-known that there is an equivalence of categories between rational $G$-module and $k[G]$-comodules.
By a polynomial $G$-module we mean a vector space $V$ on which $G$ acts linearly with coefficient functions in $A(G)$.

\begin{proposition}{\cite[Propositions 1.3, 1.4]{Dy}} \label{P:catS} Suppose $G$ admits a graded polynomial representation theory. Every polynomial $G$-module has a direct sum decomposition into homogeneous polynomial representations. The category of homogeneous polynomial $G$-modules of degree $d$ is equivalent to the category of $S(G,d)$-modules.
\end{proposition}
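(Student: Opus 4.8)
The plan is to package the hypothesis into the single statement that $A(G)$ is a genuine $\mathbb{N}$-graded bialgebra, and then run the standard dictionary between comodules and modules. First I would record that $\Delta(x_{ij})=\sum_k x_{ik}\otimes x_{kj}$ is a sum of elements of bidegree $(1,1)$, so that $\Delta(A(n,d))\subseteq A(n,d)\otimes A(n,d)$; passing to the quotient this gives $\Delta(A(G,d))\subseteq A(G,d)\otimes A(G,d)$. Since $G$ admits a graded polynomial representation theory, the sum $A(G)=\bigoplus_{d\ge 0}A(G,d)$ is direct, so $A(G)$ is an $\mathbb{N}$-graded subbialgebra of $k[G]$ with each $A(G,d)$ a finite-dimensional subcoalgebra (finite-dimensional as a quotient of $A(n,d)$). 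Next I would translate the representation-theoretic language into comodules: a polynomial $G$-module $V$ is precisely a right $A(G)$-comodule, with structure map $\Delta_V(v_i)=\sum_j v_j\otimes f_{ij}$, and requiring all coefficient functions $f_{ij}$ to lie in $A(G,d)$ is exactly requiring $\Delta_V$ to land in $V\otimes A(G,d)$; thus $V$ is a homogeneous polynomial $G$-module of degree $d$ if and only if it is a right $A(G,d)$-comodule.

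For the decomposition, given a polynomial $G$-module $V$ let $\pi_d\colon A(G)\to A(G,d)$ be the projection onto the degree-$d$ part (here the directness of the sum is used) and set $e_d:=(\mathrm{id}_V\otimes\epsilon)(\mathrm{id}_V\otimes\pi_d)\Delta_V\colon V\to V$. The counit axiom gives $\sum_d e_d=(\mathrm{id}_V\otimes\epsilon)\Delta_V=\mathrm{id}_V$, where for each $v\in V$ the sum is finite because $\Delta_V(v)$ is a finite sum of simple tensors and hence involves only finitely many degrees. Using coassociativity together with $\Delta(A(G,d))\subseteq A(G,d)\otimes A(G,d)$ one checks $e_de_{d'}=\delta_{d,d'}e_d$, so the $e_d$ are orthogonal idempotents summing to the identity and $V=\bigoplus_d e_dV$, a decomposition that is visibly functorial in $V$. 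A direct computation then shows each $V_d:=e_dV$ is a subcomodule whose coaction factors through $V_d\otimes A(G,d)$; by the translation above this is the asserted decomposition into homogeneous polynomial representations.

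For the equivalence of categories I would apply, with $C=A(G,d)$ and $C^{*}=S(G,d)$, the standard fact that when $C$ is a finite-dimensional coalgebra the category of right $C$-comodules is equivalent to the category of left $C^{*}$-modules: a coaction $\rho_V\colon V\to V\otimes A(G,d)$ yields the $S(G,d)$-action $\xi\cdot v:=(\mathrm{id}_V\otimes\xi)\rho_V(v)$, which is a left action for the convolution product on $S(G,d)=A(G,d)^{*}$ by coassociativity; conversely, because $\dim_k A(G,d)<\infty$ there is a canonical isomorphism $\Hom_k(S(G,d),V)\cong A(G,d)\otimes V$, so an $S(G,d)$-module structure dualizes to a coaction, and the two constructions are mutually inverse and functorial. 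Composing with the identification of homogeneous polynomial $G$-modules of degree $d$ with $A(G,d)$-comodules finishes the proof.

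I do not anticipate a real obstacle: the argument is the assembly of standard facts about graded coalgebras and comodules. The one genuinely load-bearing use of the hypothesis is the orthogonality relation $e_de_{d'}=\delta_{d,d'}e_d$, which requires the grading on $A(G)$ to be an honest direct sum decomposition, so that $\pi_d$ exists and $\Delta$ is graded; this is the step I would write out most carefully. A minor secondary point is to ensure that the equivalence in the third paragraph is invoked for arbitrary, not necessarily finite-dimensional, modules, which is legitimate precisely because each $A(G,d)$ is finite-dimensional.
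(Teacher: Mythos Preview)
The paper does not actually prove this proposition; it is quoted verbatim from \cite[Proposition 1.3, 1.4]{Dy} and stated without proof. Your argument is correct and is precisely the standard one: once the hypothesis is unpacked as saying that $A(G)=\bigoplus_{d\ge 0}A(G,d)$ is a genuine $\mathbb{N}$-graded bialgebra with finite-dimensional homogeneous pieces, the decomposition of any $A(G)$-comodule via the homogeneous idempotents $e_d$ and the equivalence between right $A(G,d)$-comodules and left $S(G,d)$-modules are routine. This is essentially the argument Doty gives in the cited reference, so there is nothing to compare.
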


We take $\M_G=\overline{G}$, the Zariski closure of $G$ in $\M_n$. Then $\M_G$ is a closed submonoid of $\M_n$ with $G$ as its group of units.
$\M_G$ is called the associated algebraic monoid of $G$. Let $I(\M_G)$ be the vanishing ideal of $\M_G$ in $\M_n$.

\begin{proposition}{\cite[Proposition 2.4]{Dy}} \label{P:criterionGPRT} $G$ admits a graded polynomial representation theory if and only if $I(\M_G)$ is homogeneous. In this case, we have a coalgebra isomorphism $A(G,d)\cong k[\M_G]_d$, so the algebra $S(G,d)$ consists of those elements in $S(n,d)$ vanishing on $I_d(\M_G)=A(n,d)\cap I(\M_G)$.
\end{proposition}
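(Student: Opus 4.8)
The plan is to reduce the whole statement to the identification of $A(G)$ with the coordinate bialgebra of $\M_G$, together with one elementary fact about graded ideals. First I would record the basic identification: restriction of polynomial functions is a surjective algebra homomorphism $\pi\colon A(n)=k[\M_n]\to A(G)$, and since $\M_G=\overline{G}$ is the Zariski closure of $G$ in $\M_n$, a polynomial function on $\M_n$ vanishes on $G$ if and only if it vanishes on $\M_G$; hence $\ker\pi=I(\M_G)$ and $A(G)\cong k[\M_n]/I(\M_G)=k[\M_G]$ as algebras. Because the inclusion $\M_G\hookrightarrow\M_n$ is a morphism of algebraic monoids, $\pi$ is in fact a morphism of bialgebras, so $I(\M_G)$ is a coideal as well as an ideal; restricting $\pi$ to the subcoalgebra $A(n,d)$ then shows that $A(G,d)=\pi(A(n,d))$ is a subcoalgebra of $A(G)$ with $A(G,d)\cong A(n,d)/I_d(\M_G)$, where $I_d(\M_G)=A(n,d)\cap I(\M_G)$ is a coideal of $A(n,d)$.

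The core of the proof is the equivalence. Consider the canonical surjection $\Phi\colon\bigoplus_{d\ge 0}A(n,d)\to A(G)$, $(f_d)_d\mapsto\pi\bigl(\sum_d f_d\bigr)$; it factors as $\bigoplus_d A(n,d)\twoheadrightarrow\bigoplus_d A(G,d)\xrightarrow{\ \Psi\ }A(G)$, the first map having kernel $\bigoplus_d I_d(\M_G)$ and $\Psi$ being the summation map. Now $\Psi$ is always surjective, and $\sum_d A(G,d)$ is a direct sum — i.e.\ $G$ admits a graded polynomial representation theory — exactly when $\Psi$ is injective, equivalently when $\ker\Phi=\bigoplus_d I_d(\M_G)$. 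Since $\ker\Phi=\{(f_d)_d:\sum_d f_d\in I(\M_G)\}$ and the inclusion $\bigoplus_d I_d(\M_G)\subseteq\ker\Phi$ is automatic, this equality holds if and only if every $f\in I(\M_G)$ has all of its homogeneous components again in $I(\M_G)$ — that is, if and only if $I(\M_G)$ is homogeneous. This yields both implications simultaneously.

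Now assume $I(\M_G)$ is homogeneous. Then the biideal $I(\M_G)$ is graded, so $k[\M_G]=A(n)/I(\M_G)$ is a graded bialgebra whose degree-$d$ subcoalgebra is $k[\M_G]_d=(A(n,d)+I(\M_G))/I(\M_G)\cong A(n,d)/I_d(\M_G)$; under the isomorphism $A(G)\cong k[\M_G]$ this piece corresponds to $A(G,d)$, giving the coalgebra isomorphism $A(G,d)\cong k[\M_G]_d$. For the final assertion I would dualize the short exact sequence $0\to I_d(\M_G)\to A(n,d)\to A(G,d)\to 0$ of finite-dimensional coalgebras: this identifies $S(G,d)=A(G,d)^{*}$ with the annihilator of $I_d(\M_G)$ inside $A(n,d)^{*}=S(n,d)$, i.e.\ with the elements of $S(n,d)$ vanishing on $I_d(\M_G)$, and this annihilator is a subalgebra of $S(n,d)$ precisely because $I_d(\M_G)$ is a coideal, as already noted before the statement.

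I expect no serious obstacle here: the argument is a chain of standard identifications. The one point requiring care is keeping the bialgebra structures aligned through the various quotients — in particular recognizing that $I_d(\M_G)$ is a coideal of $A(n,d)$ (because $\pi$ restricts to a surjective coalgebra map $A(n,d)\to A(G,d)$), so that the vector-space equalities above genuinely upgrade to coalgebra, and hence dually to algebra, statements about $S(G,d)$.
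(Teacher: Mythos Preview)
The paper does not supply its own proof of this proposition: it is quoted verbatim from \cite[Proposition 2.4]{Dy} and left without argument. Your reconstruction is correct and is essentially the standard proof one finds in that reference --- identifying $A(G)$ with $k[\M_G]$ via the Zariski-closure property, reducing the directness of $\sum_d A(G,d)$ to the homogeneity of the kernel ideal, and then dualizing the short exact sequence of finite-dimensional coalgebras to describe $S(G,d)$ as an annihilator --- so there is nothing to compare against and nothing to repair.
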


We provide a last point of view of $S(G,d)$ from the tensor power representations. Let $V$ be the ($n$-dimensional) natural $\op{M}_n$-representation. For any $d\in \mb{N}$, we have an action of $\op{M}_n$ on the $d$th tensor power of $V$, by $$A(v_1\otimes\cdots\otimes v_d)=Av_1\otimes\cdots\otimes Av_d.$$
Let $\phi_d$ be the corresponding representation $\op{M}_n\to \End(V^{\otimes d})$. It was proved by Schur \cite{M} that $S(n,d)=\op{span}(\phi_d(\GL_n))=\op{span}(\phi_d(\op{M}_n))=\End_{\mf{S}_d}(V^{\otimes d})$.

\begin{proposition}{\cite[Proposition 3.2]{Dy}} If $G$ admits a graded polynomial representation theory, then
$$S(G,d)=\op{span}(\phi_d(G))=\op{span}(\phi_d(\M_G)).$$
\end{proposition}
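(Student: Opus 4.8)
The plan is to route everything through the identification $S(n,d)=A(n,d)^*$ (under which $S(n,d)=\End(V^{\otimes d})$) and the evaluation pairing between a space of functions and its points. Here $\phi_d(G)$ and $\phi_d(\M_G)$ are read, as in Schur's theorem \cite{M} recalled above, as the $k$-linear spans of the corresponding images in $\End(V^{\otimes d})$; this is the right reading, since over a non-closed base field $\phi_d(\M_G)$ need not be a subspace on the nose. The first step is to record the basic dictionary: for a point $A\in\M_n$, unwinding the right $A(n,d)$-comodule structure on $V^{\otimes d}$ identifies $\phi_d(A)\in\End(V^{\otimes d})=A(n,d)^*$ with the evaluation functional $\operatorname{ev}_A\colon f\mapsto f(A)$. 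Granting this, Schur's equality $S(n,d)=\phi_d(\M_n)$ becomes the statement that $\{\operatorname{ev}_A:A\in\M_n\}$ spans $A(n,d)^*$, i.e.\ that the pairing between the finite-dimensional space $A(n,d)$ and the points of $\M_n$ is non-degenerate — equivalently, that a degree-$d$ polynomial vanishing on all matrices is zero, which holds because $\operatorname{char}k=0$ makes $k$ infinite.

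The second step applies the same non-degeneracy principle over $G$ and over $\M_G$. By Proposition \ref{P:criterionGPRT}, $A(G,d)$ is the coalgebra $k[\M_G]_d=A(n,d)/I_d(\M_G)$, i.e.\ the space of degree-$d$ polynomial functions on $\M_G$ (equivalently on its dense subset $G$), and $S(G,d)$ is exactly the annihilator of $I_d(\M_G)$ inside $A(n,d)^*$, which is canonically $A(G,d)^*$. For $g\in G$ one has $\phi_d(g)=\operatorname{ev}_g$, and since $g\in\M_G$ this functional kills $I_d(\M_G)$; hence $\operatorname{span}_k\phi_d(G)\subseteq S(G,d)$. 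It is all of $S(G,d)$, because an element of $A(G,d)$ annihilated by every $\operatorname{ev}_g$, $g\in G$, vanishes on $G$ and is therefore $0$ in $A(G,d)$. This gives $\operatorname{span}_k\phi_d(G)=S(G,d)$. Finally $\operatorname{span}_k\phi_d(\M_G)=S(G,d)$ follows either by repeating this argument with $\M_G$ in place of $G$ (an element of $k[\M_G]_d$ vanishing on $\M_G$ is $0$), or more quickly by noting that $\M_G=\overline{G}$ and $\phi_d$ is a morphism of varieties, so $\phi_d(\M_G)\subseteq\overline{\phi_d(G)}$, and a set has the same $k$-span as its Zariski closure (linear subspaces being Zariski closed).

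The one delicate point is the opening dictionary $\phi_d(A)=\operatorname{ev}_A$: one must keep straight which of the module/comodule dualities is in force and check that the algebra functional attached to the point $A$ acts on $V^{\otimes d}$ precisely by $\phi_d(A)$. Everything afterward is soft — it combines Proposition \ref{P:criterionGPRT} with non-degeneracy of the evaluation pairing on a finite-dimensional space of functions over an infinite field (applied first over $G$, then directly or via $\M_G=\overline{G}$ over $\M_G$) — so I do not anticipate any obstruction beyond what Proposition \ref{P:criterionGPRT} already supplies.
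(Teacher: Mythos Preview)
The paper does not supply its own proof of this statement; it is quoted verbatim from \cite[Proposition 3.2]{Dy}, so there is nothing in the present paper to compare your argument against. That said, your proof is correct and is the natural one. One small slip: you write ``$S(n,d)=\End(V^{\otimes d})$'' where you mean $S(n,d)=\End_{\mf{S}_d}(V^{\otimes d})$ (the full endomorphism algebra has dimension $n^{2d}$, not $\dim A(n,d)$), but this does not affect the argument, since what you actually use is that the image of $\phi_d$ lies in $S(n,d)$ and that under $S(n,d)\cong A(n,d)^*$ the operator $\phi_d(A)$ corresponds to the evaluation functional $\operatorname{ev}_A$. With that dictionary in hand, Proposition~\ref{P:criterionGPRT} identifies $S(G,d)$ with the annihilator of $I_d(\M_G)$ inside $A(n,d)^*$, and both inclusions $\operatorname{span}_k\phi_d(G)\subseteq S(G,d)$ and its reverse follow from non-degeneracy of the evaluation pairing over an infinite field, exactly as you outline; the passage to $\M_G=\overline{G}$ is then immediate by either of the two routes you give.
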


It is well-known that the semisimplicity of $\op{span}(\phi_d(G))$ is equivalent to complete reducibility of $V^{\otimes d}$ as $G$-module. So $\op{span}(\phi_d(G))$ is semisimple if $G$ is reductive.
We have the following monoid analogue of the Peter-Weyl theorem.

\begin{lemma} \label{L:k[M_G]} As $G$-bimodule algebras, $S(G,d)\cong\bigoplus_{\rho}\End(V_\rho)$, where $\rho$ runs through all irreducible degree $d$ polynomial representations of $G$.
So if $G$ admits a graded polynomial representation theory, then as $G$-bimodule algebras,
$$k[\M_G]^*\cong\prod_{\rho\in\irr(G)}\End(V_\rho),$$
where $\irr(G)$ is the set of all irreducible polynomial representations of $G$.
\end{lemma}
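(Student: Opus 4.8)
The plan is to prove the graded statement $S(G,d)\cong\bigoplus_{\rho}\End(V_\rho)$ first, then assemble. I reduce the second claim to the first: if $G$ admits a graded polynomial representation theory, Proposition \ref{P:criterionGPRT} gives that $I(\M_G)$ is homogeneous, whence $k[\M_G]=\bigoplus_{d\ge0}k[\M_G]_d$, and the coalgebra isomorphisms $k[\M_G]_d\cong A(G,d)$ of that proposition yield $k[\M_G]=\bigoplus_{d\ge0}A(G,d)$ as graded coalgebras, hence $k[\M_G]^*=\prod_{d\ge0}S(G,d)$ as algebras. Moreover every irreducible polynomial $G$-module is homogeneous of a unique degree by \cite[Prop.~1.3, 1.4]{Dy}, so $\irr(G)=\bigsqcup_{d}\irr_d(G)$, where $\irr_d(G)$ denotes the irreducible degree-$d$ polynomial representations; the graded statement then gives $k[\M_G]^*\cong\prod_{\rho\in\irr(G)}\End(V_\rho)$ as $G$-bimodule algebras. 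So everything comes down to the degree-$d$ piece.

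For that, the main input is the monoid Peter--Weyl theorem \cite[Prop.~13]{R}: using the coalgebra identification $A(G,d)\cong k[\M_G]_d$ of Proposition \ref{P:criterionGPRT}, it provides a $G\times G$-equivariant coalgebra isomorphism $A(G,d)\cong\bigoplus_{\rho}V_\rho^*\otimes V_\rho$, the sum over $\irr_d(G)$, with $G$ acting on $V_\rho^*$ on one side and on $V_\rho$ on the other. Since $A(G,d)$ is finite-dimensional, dualizing this coalgebra isomorphism produces an algebra isomorphism, and the dual of the matrix coalgebra $V_\rho^*\otimes V_\rho$ is the matrix algebra $V_\rho\otimes V_\rho^*=\End(V_\rho)$ with its standard $G$-bimodule structure; hence $S(G,d)=A(G,d)^*\cong\bigoplus_{\rho\in\irr_d(G)}\End(V_\rho)$ as $G$-bimodule algebras. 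Alternatively, one can see the shape of the algebra directly: since $G$ is reductive, $V^{\otimes d}$ is completely reducible, so $S(G,d)=\phi_d(G)$ is semisimple; it acts faithfully on $V^{\otimes d}$ by construction, so by the double centralizer theorem $S(G,d)=\End_C(V^{\otimes d})\cong\prod_i\End(W_i)$, where $W_i$ runs over the distinct simple $S(G,d)$-modules occurring in $V^{\otimes d}$ and $C$ is the centralizer; faithfulness over a semisimple algebra forces every simple to occur, and by the equivalence of \cite[Prop.~1.3, 1.4]{Dy} the simple $S(G,d)$-modules are exactly the members of $\irr_d(G)$.

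I expect the main obstacle to be a bookkeeping one: extracting from \cite[Prop.~13]{R} the precise $G\times G$-equivariance and its compatibility with the grading, and matching the several descriptions of the indexing set — the $\rho$ appearing in the Peter--Weyl decomposition, the simple $S(G,d)$-modules, and $\irr_d(G)$ — so that the final product is indexed, without ambiguity, by all irreducible polynomial representations of $G$. Once a block is identified as $\End(V_\rho)$, its algebra structure and its $G$-bimodule structure are forced, so no further computation is needed there.
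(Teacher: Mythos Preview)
Your proposal is correct and follows essentially the same route as the paper: the paper does not give a formal proof environment for this lemma but derives it in the preceding paragraph from the semisimplicity of $\phi_d(G)$ (via reductivity of $G$) together with the monoid Peter--Weyl theorem \cite[Proposition~13]{R}, which is exactly what you invoke. Your write-up is more detailed---in particular you spell out the reduction of the product statement to the graded one via Proposition~\ref{P:criterionGPRT} and offer the double-centralizer argument as an alternative---but the key ingredients and the overall strategy coincide.
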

\begin{proof} The group $G\times G$ acts on $k[\op{M}_G]_d$ by the left and right translations.
Let $\rho$ be any degree $d$ polynomial representation of $G$.
We define $\varphi_\rho(v^*\otimes v) = \innerprod{v^*, \rho(g)v}$ for $g\in G, v^*\in V_\rho^*$, and $v\in V_\rho$. We extend the definition linearly to a map $V_\rho^*\otimes V_\rho \to k[\op{M}_G]_d$. It is easy to check that $\varphi_\rho$ is $G\times G$-equivariant.
Since $V_\rho^*\otimes V_\rho$ is an irreducible $G$-bimodule,
we have that $V_\rho^*\otimes V_\rho \hookrightarrow k[\op{M}_G]_d$ by Schur's lemma.
On the other hand, we can decompose $S(G,d)$ as a module over itself.
By Proposition \ref{P:catS}, only polynomial representations of $G$ can appear in the decomposition. We conclude that $S(G,d)\cong\bigoplus_{\rho}\End(V_\rho)$ as $G$-bimodules, where $\rho$ runs through all irreducible degree $d$ polynomial representations of $G$.
Finally, we need to show that the matrix multiplication in $\bigoplus_{\rho}\End(V_\rho)$ agrees with the multiplication in $S(G,d)$ so that $S:=S(G,d)$ is the $G$-bimodule algebra as required.
But this follows from
$S\cong \End_{S}(S,S)\cong \End_{S}(\bigoplus_{\rho}\End(V_\rho),\bigoplus_{\rho}\End(V_\rho))\cong \bigoplus_{\rho}\End(V_\rho)$ by Schur's lemma.
\end{proof}

Knowing that $S(G,d)$ is semisimple, it is an important problem to determine a complete set of primitive orthogonal idempotents. This can be a very hard problem in general, but for the classical Schur algebras $S(n,d)$, it is possible (especially when $d$ is small). Here are some simple examples, which will be used later.

Recall that the standard monomial basis of $A(n,d)$ is indexed by the {\em generalized permutations} $\sm{i_1&i_2&\cdots&i_d\\j_1&j_2&\cdots&j_d}$.
The pairs $\sm{i_k\\ j_k}$ are arranged in non-decreasing lexicographic order from left to right. In other words, the $i$'s are arranged in non-decreasing order, and the $j$'s corresponding to the same $i$ are in non-decreasing order.
We denote the corresponding dual basis in $S(n,d)$ by $\xi_{j_1j_2\cdots j_d}^{i_1i_2\cdots i_d}$. A nice combinatorial rule for multiplying such a basis is given in \cite{Mz}.

\begin{example} \label{ex:d2} Let $A=S(n,2)$. It has the following complete set of primitive orthogonal idempotents
\begin{align*}& \left\{ \frac{1}{2}\big(\xi_{ij}^{ij}-\xi_{ji}^{ij}\big) \right\}_{1\leq i < j\leq n} & \ydiagram{1,1}\\
& \left\{ \xi_{ii}^{ii},\frac{1}{2}\big(\xi_{ij}^{ij}+\xi_{ji}^{ij}\big) \right\}_{1\leq i < j\leq n}  & \ydiagram{2}
\end{align*}
The right column indicates the corresponding irreducible representations.
\end{example}
\begin{example} \label{ex:d3} Let $A=S(n,3)$. It has the following complete set of primitive orthogonal idempotents
\begin{align*}
& \left\{ \frac{1}{6}\sum_{\omega\in\mf{S}_3}\op{sgn}(\omega) \xi_{\omega(ijk)}^{ijk} \right\}  &  \ydiagram{1,1,1}\\
& \left\{ \frac{1}{3}\big(2\xi_{iij}^{iij}-\xi_{iji}^{iij}\big),
\frac{1}{3}\big(2\xi_{ijj}^{ijj}-\xi_{jij}^{ijj}\big),
\frac{1}{3} (\xi_{ikj}^{ijk}-\xi_{jki}^{ijk}+\xi_{ikj}^{ijk}-\xi_{jik}^{ijk}),
\frac{1}{3} (\xi_{ijk}^{ijk}-\xi_{ikj}^{ijk}+\xi_{jik}^{ijk}-\xi_{kij}^{ijk})\right\} & \ydiagram{2,1} \\
& \left\{ \xi_{iii}^{iii},\frac{1}{3}\big(\xi_{iij}^{iij}+\xi_{iji}^{iij}\big), \frac{1}{3}\big(\xi_{ijj}^{ijj}+\xi_{jij}^{ijj}\big), \frac{1}{6}\sum_{\omega\in\mf{S}_3}\xi_{\omega(ijk)}^{ijk} \right\} & \ydiagram{3}
\end{align*}
where $1\leq i<j<k\leq n$.
\end{example}

\section{Reductive Group Action}
\begin{definition}
Let $B$ be a $k$-bialgebra. A (right) {\em $B$-comodule algebra} $A$ is a $k$-algebra with a right $B$-comodule structure $\Delta_A:A\to A\otimes B$. We required $\Delta_A$ to be a $k$-algebra homomorphism. The {\em smash product algebra} $A\# B^*$ is by definition the vector space $A\otimes B^*$ with multiplication
$$(c\otimes h)(a\otimes f)=\sum ca_{(0)} \otimes (a_{(1)}\cdot h)f.$$
Here $a_{(1)}\cdot h$ is the usual (left) $B$-action on $B^*$, that is, $a_{(1)}h(b)=h(ba_{(1)})$.
\end{definition}
We observe that a left $A$-module $M$, which is also a right $B$-comodule $\Delta_M: M\to M\otimes B$ such that
$$\Delta_M(am)=\Delta_A(a)\Delta_M(m)$$ is a left $A\# B^*$-module, but not vice versa.
We may abuse of notation writing $a$ and $f$ for $a\otimes 1_{B^*}$ and $1_A\otimes f$.
If $\Delta_A(1)=1_A\otimes 1_{B}$, then we will write $af$ for $a\otimes f$ and $AB^*$ for $A\#B^*$ in this context.

Let $G$ be an infinite connected reductive group over $k$, and $\M_G$ be the associated algebraic monoid.
Since $G$ is algebraic, we will only consider rational action of $G$. In fact, we assume that $G$ acts
polynomially as automorphisms on some $k$-algebra $A$. Then $A$ becomes a $k[\M_G]$-comodule algebra.
As in the finite group case, we also have a $G$-action on the category of $A$-modules. We define (proj-coherently) $G$-invariant and $G$-indecomposable module as before.

The group $G$ can be naturally embedded into the dual coordinate algebra $k[\M_G]^*$.
For every $g\in G$, we define $\epsilon_g\in k[\M_G]^*$ as $\epsilon_g(f)=f(g)$.
Moreover, the embedding respects actions: $\epsilon_g(m)=\sum\epsilon_g(m_{(1)})m_{(0)}=\sum m_{(1)}(g)m_{(0)}=gm$.

\begin{proposition} \label{L:A[M_G]-mod} If $M$ is an $A\#k[\M_G]^*$-module, then $m\mapsto gm$ defines an $A$-module isomorphism $M\cong {^g M}$ for all $g\in G$.
\end{proposition}
\begin{proof}
We need to show for all $g\in G,a\in A,m\in M$ that
$$(1\otimes \epsilon_g)(a\otimes 1)(m)=g(am)=(ga)(gm).$$
For all $g\in G,a\in A,m\in M$, we have that
$$gm=\sum m_{(1)}(g) m_{(0)}, \text{ and }\ ga=\sum a_1(g) a_0.$$
Then \begin{align*}
(1\otimes \epsilon_g\cdot a\otimes 1)(m)&=\sum a_{(0)}\otimes (a_{(1)}\cdot \epsilon_g)(m)\\
&=\sum a_{(0)} \left( a_{(1)}\cdot \epsilon_g(m_{(1)}) \right) m_{(0)}\\
&=\sum a_{(0)} \epsilon_g(m_{(1)}a_{(1)}) m_{(0)}\\
&=\sum a_{(0)}m_{(1)}(g)a_{(1)}(g) m_{(0)}\\
&= (ga)(gm).
\end{align*} \end{proof}

Conversely, given a $G$-invariant $A$-module $M$, we assume that for each $g\in G$ we can fix an isomorphism $i_g:M\xrightarrow{} {^g M}$ such that
${^g i_h} i_g=i_{hg}$. Then we can define a $G$-action on $M$ by $g(m)=i_g(m)$. If such an action can be extended to $k[\M_G]^*$ (e.g., the action is polynomial), then we get an $A\#k[\M_G]^*$-module. To simplify the notation, we will write $A[\M_G]^*$ for $A\#k[\M_G]^*$.
Such a module as an $A$-module is called {\em coherently} $G$-invariant {\em in this context}. Under this definition, we also have the notion of (strong) $G$-root as in the finite group case.

Let $Q$ be a finite quiver without oriented cycles. The condition of no oriented cycles is not essential. But otherwise, we need to work with locally finite actions.
We keep our assumption that $G$ permutes the set of primitive orthogonal idempotents of $kQ$, and stabilizes the arrow span $kQ_1$.
Since the set of primitive orthogonal idempotents is finite but $G$ is infinite and connected, $G$ has to fix each idempotent.
In particular, $G$ is a reductive subgroup of $\Aut_1(Q):=\prod_{u,v \in Q_0} \GL(R_{uv})$, where $R_{uv}$ is the vector space spanned by arrows from $u$ to $v$.
From now on, we assume that $G$ admits a graded polynomial representation theory.

\subsection{Description of $Q_G$}
It turns out that $kQ[\M_G]^*$ is Morita equivalent to some hereditary algebra $kQ_G$.
The description is completely analogous to the one in Section \ref{ss:Q_G_finite}, except that $Q_G$ is possibly an infinite quiver.

Let $\irr(G)$ be the set of all polynomial representations of $G$.
The vertex set of $Q_G$ is $$\bigcup_{u\in Q_0} \{u\}\times \irr(G).$$
The arrow set from $(u,\rho)$ to $(v,\sigma)$ is a basis of
$$\Hom_{G} (V_\rho, R_{uv}\otimes V_\sigma).$$

\begin{theorem} \label{T:Q_G} Let $Q$ and $G$ be as above,
then $kQ[\M_G]^*$ is Morita equivalent to the path algebra $kQ_G$.
\end{theorem}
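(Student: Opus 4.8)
The plan is to mimic the argument of \cite{Dt} recalled in Section \ref{ss:Q_G_finite}, with $k[G]$ replaced by $k[\M_G]^*$. The key structural fact making this possible is Lemma \ref{L:k[M_G]}: since $G$ admits a graded polynomial representation theory, $k[\M_G]^*\cong\prod_{\rho\in\irr(G)}\End(V_\rho)$ is a (possibly infinite) product of matrix algebras over $k$, hence a semisimple, locally finite algebra whose simple modules are exactly the $V_\rho$. Thus the role played in the finite case by Maschke's theorem for $k[G_u]$ is played here by this decomposition. First I would pass to the subalgebra $e_0(kQ[\M_G]^*)e_0$ where $e_0=\sum_{u\in Q_0}e_u$ (here every vertex is its own orbit representative because $G$ is connected, so $e_0=1$ already when $Q$ has no multiple vertices in an orbit — in any case $G$ fixes every idempotent). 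The point of this reduction in the finite case is to arrive at a product of group algebras; here $e_0(kQ)e_0$ already contains $R=\bigoplus_u ke_u$, and the arrow spans $R_{uv}=kQ(u,v)$ are finite-dimensional polynomial $G$-modules, so everything in sight is a polynomial $G$-module.

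Next I would construct the basic algebra. For each $u\in Q_0$ and each $\rho\in\irr(G)$ fix a primitive idempotent $e_{u\rho}\in\End(V_\rho)\subset k[\M_G]^*$ (the projector onto a chosen line of $V_\rho$, sitting inside the $u$-th copy), and set $e=\sum_{u,\rho}e_{u\rho}$. Although this is an infinite sum, it is a \emph{locally finite} idempotent: on any finitely generated $kQ[\M_G]^*$-module only finitely many $e_{u\rho}$ act nontrivially, because such a module is a polynomial $G$-module of bounded degree, hence involves only finitely many $\rho$. So $e(kQ[\M_G]^*)e$ makes sense as an algebra (without unit, but with local units), and I would check that it is Morita equivalent to $kQ[\M_G]^*$ in the appropriate sense — every indecomposable projective $kQ[\M_G]^*\otimes_{kQ}(kQ)$ . . . concretely, each indecomposable projective of $kQ[\M_G]^*$ is a summand of $kQ[\M_G]^*\otimes_{kQ}(kQe_u)$, which as an $e(kQ[\M_G]^*)e$-module decomposes according to $\bigoplus_\rho V_\rho\otimes(\text{mult.})$, and the $e_{u\rho}$ pick out the full set of pairwise non-isomorphic indecomposable projectives. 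This identifies the vertex set of the quiver of the basic algebra with $\bigcup_{u\in Q_0}\{u\}\times\irr(G)$.

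Then I would compute the arrows, i.e. the space $e_{u\rho}\,J/J^2\,e_{v\sigma}$ where $J$ is the radical. Exactly as in the finite case, $e_{u\rho}(kQ[\M_G]^*)e_{v\sigma}$ in degree-one-in-arrows is $e_{u\rho}(R_{uv}\otimes k[\M_G]^*)e_{v\sigma}$, and using $k[\M_G]^*\cong\prod\End(V_\tau)$ together with $e_{u\rho}=$ a rank-one projector, this is naturally identified with $\Hom_G(V_\rho,R_{uv}\otimes V_\sigma)$ (the $G$-equivariant maps, by Schur's lemma applied to the matrix-algebra structure). Since $Q$ has no oriented cycles and $kQ$ is hereditary, and since $T=e(kQ[\M_G]^*\otimes_{kQ}-)$ is exact and preserves projective presentations (the argument of Section \ref{ss:functors} carries over verbatim), the basic algebra has no relations, so it is the path algebra $kQ_G$. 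Finally I would record that although $Q_G$ may be infinite, each connected component is finite (this is Proposition \ref{P:component}), so $kQ_G$ is a legitimate — if infinite-dimensional — hereditary algebra and the Morita equivalence holds component by component.

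The main obstacle I anticipate is the bookkeeping around infiniteness: making precise the sense in which $e=\sum e_{u\rho}$ is an idempotent and $e(kQ[\M_G]^*)e$ is "Morita equivalent" to $kQ[\M_G]^*$ when neither ring is finite-dimensional and $e$ is not a finite sum. The cleanest fix is to work with the idempotent \emph{filtration}: let $e_{\leq d}=\sum_u\sum_{\deg\rho\leq d}e_{u\rho}$, which is a genuine idempotent since only finitely many polynomial irreducibles have degree $\leq d$; prove the Morita statement for each $e_{\leq d}(kQ[\M_G]^*)e_{\leq d}$ by the finite-dimensional argument applied to the degree-$\leq d$ part, and pass to the colimit. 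Everything else — the vertex set, the arrow computation via Schur's lemma, the vanishing of relations from heredity and exactness of $T$ — is a routine transcription of \cite{Dt} once Lemma \ref{L:k[M_G]} is in hand.
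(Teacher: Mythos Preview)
Your proposal is correct and follows the same line as the paper's proof: fix rank-one idempotents $e_{u\rho}=e_u\otimes e_\rho$ using Lemma~\ref{L:k[M_G]}, form $e=\sum e_{u\rho}$, and compute the degree-one piece $e_{u\rho}(R_{uv}\otimes k[\M_G]^*)e_{v\sigma}\cong\Hom_G(V_\rho,R_{uv}\otimes V_\sigma)$. Two minor differences are worth noting. First, the paper sidesteps your infiniteness worry entirely: since $k[\M_G]^*=\prod_\rho\End(V_\rho)$ is a genuine \emph{product}, the element whose $\rho$-coordinate is the chosen rank-one projector is an honest idempotent of the product algebra, so $e$ needs no filtration or colimit argument. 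Second, to see that the basic algebra has no relations the paper does not invoke heredity or exactness of $T$ as you do; instead it uses the tensor-algebra presentation $kQ=T(R,R_1)$ directly, observing that $kQ[\M_G]^*\cong T(R[\M_G]^*,R_1[\M_G]^*)$ and hence $e(kQ[\M_G]^*)e\cong T(eR[\M_G]^*e,\,eR_1[\M_G]^*e)$, which is visibly a path algebra once one computes $eR_1[\M_G]^*e$. Your heredity argument works too, but the tensor-algebra route is shorter and avoids appealing to Section~\ref{ss:functors}.
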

\begin{proof} Let $R$ be the (maximal semisimple) subalgebra of $kQ$ generated by the primitive orthogonal idempotents, and $R_1\subset kQ$ be the $R$-bimodule spanned by the arrows, so $kQ$ is the tensor algebra $T(R,R_1)$.

We fix for each $u\in Q_0$ and $\rho\in\irr(G)$, a primitive idempotent $e_{\rho}$ of $k[\M_G]^*$ corresponding to $\rho$ (see Lemma \ref{L:k[M_G]}).
Then $\{e_u\otimes e_\rho\}_{u\in Q_0,\rho\in\irr(G)}$ is a basic set of primitive orthogonal idempotents of $kQ[\M_G]^*$.
Let $e=\sum_{u\in Q_0,\rho\in\irr(G)} e_u\otimes e_\rho$, then
$$eR[\M_G]^*e=\prod_{u\in Q_0,\rho\in\irr(G)} ke_u\otimes e_\rho.$$

As $G$ stabilizes $R$ and $R_1$, it is easy to see that we have equivalence of categories
$$\module kQ[\M_G]^*\cong \module T(R[\M_G]^*,R_1[\M_G]^*) \cong \module T(eR[\M_G]^*e,eR_1[\M_G]^*e).$$
\begin{align*}
e_u\otimes e_\rho (R_1[\M_G]^*) e_v\otimes e_\sigma &= e_u\otimes e_\rho (R_1e_v\otimes k[\M_G]^* e_\sigma) \\
&= e_\rho (R_{uv}[\M_G]^* e_\sigma) \\
&=\Hom_k(k, e_{\rho} \left( k[\M_G]^*) (R_{uv}[\M_G]^* e_{\sigma}) \right)\\
&\cong \Hom_G(V_\rho, (R_{uv}[\M_G]^* e_{\sigma}))\\
&=\Hom_G(V_\rho, R_{uv}\otimes V_\sigma).
\end{align*}
\end{proof}

Since we are mainly interested in coherently $G$-indecomposable and indecomposable $G$-invariant modules, it is enough to focus on connected components of $Q_G$.
\begin{proposition} \label{P:component} If the quiver $Q$ is finite without oriented cycles, then each connected component of $Q_G$ is finite without oriented cycles.
\end{proposition}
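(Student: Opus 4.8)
The statement asserts three things about each component $\mathcal C$ of $Q_G$: that any two of its vertices are joined by finitely many arrows, that it has no oriented cycles, and that it has finitely many vertices. The first is immediate, since the arrows from $(u,\rho)$ to $(v,\sigma)$ form a basis of $\Hom_G(V_\rho,R_{uv}\otimes V_\sigma)$, a finite-dimensional space because $R_{uv}$, $V_\rho$ and $V_\sigma$ are all finite-dimensional.

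For the absence of oriented cycles I would use the forgetful morphism of quivers $\pi\colon Q_G\to Q$, $(u,\rho)\mapsto u$. If $Q_G$ has an arrow $(u,\rho)\to(v,\sigma)$ then $\Hom_G(V_\rho,R_{uv}\otimes V_\sigma)\neq 0$, so $R_{uv}\neq 0$ and $Q$ has an arrow $u\to v$; thus $\pi$ sends arrows to arrows. An oriented cycle in $Q_G$ would therefore project to a closed oriented walk in $Q$, which (as $Q$ has no loops) contains an oriented cycle of $Q$ --- impossible. The same remark rules out loops in $Q_G$.

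The substantial part is finiteness of the vertex set of $\mathcal C$, and here the plan is to bring in the polynomial grading. Since $G$ admits a graded polynomial representation theory, each $V_\rho$ is homogeneous of a well-defined degree $\deg\rho\in\mathbb Z_{\geq 0}$; let $D$ be the top degree occurring in the finite-dimensional polynomial $G$-module $kQ_1$. An arrow $(u,\rho)\to(v,\sigma)$ forces $V_\rho$ to be a summand of $R_{uv}\otimes V_\sigma$, hence $\deg\sigma\leq\deg\rho\leq\deg\sigma+D$: the polynomial degree weakly decreases along arrows of $Q_G$ and changes by at most $D$ on each. Since $Q$ is finite without oriented cycles, there is a height function $h\colon Q_0\to\mathbb Z_{\geq 0}$ strictly increasing along arrows; pulling it back along $\pi$ shows directed paths in $Q_G$ have length at most $N:=\max_u h(u)$, so every vertex of $\mathcal C$ is linked by a directed path of length $\leq N$ to a source of $\mathcal C$. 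I would then argue that the degrees $\{\deg\rho\mid(u,\rho)\in\mathcal C\}$ are bounded, say by $M$. Granting this, $\mathcal C$ is contained in $\{(u,\rho)\mid u\in Q_0,\ \deg\rho\leq M\}$, a finite set: $Q_0$ is finite, and for each $d$ the irreducible polynomial representations of degree $d$ are the simple modules of the finite-dimensional algebra $S(G,d)$, hence finite in number. With the first paragraph, $\mathcal C$ is then a finite quiver without oriented cycles.

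The main obstacle is the boundedness of the degrees on $\mathcal C$: the bound along a single arrow does not control the degree along a long undirected walk inside $\mathcal C$, so one must exploit the interaction of the grading with the acyclic combinatorics of $Q$ more globally. The natural route is to produce $\psi\colon Q_0\to\mathbb Z$ for which $(u,\rho)\mapsto\deg\rho+\psi(u)$ is constant on each component --- then $\mathcal C$ lies in degrees $\leq c+\max_u\psi(u)$, where $c$ is the common value --- which comes down to checking that the degree shifts along the arrows of $Q_G$ fit together consistently over the underlying combinatorics of $Q$. I expect essentially all the work, and all genuine use of the hypotheses on $Q$ and on the action, to be concentrated here.
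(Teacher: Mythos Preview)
Your handling of local finiteness and of the absence of oriented cycles matches the paper's: it too notes that each vertex has only finitely many neighbours (because $kQ_1$ has bounded polynomial degree) and rules out oriented cycles by projecting $(u,\rho)\mapsto u$ onto the acyclic quiver $Q$.

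The obstacle you isolate for the finiteness of the vertex set is genuine, and the paper does not overcome it either: after establishing local finiteness it simply asserts ``But $Q$ has finitely many vertices, the component containing $e_u\otimes e_\rho$ must be finite as well'', which does not follow. In fact the claim fails at the stated level of generality. Take $Q=K_2$ and $G=\GL_1=k^*$ acting on the two arrows with weights $0$ and $1$; then $G$ admits a graded polynomial representation theory, the action is polynomial, $R_{12}\cong V_0\oplus V_1$, and the arrows of $Q_G$ are $(1,d)\to(2,d)$ and $(1,d{+}1)\to(2,d)$ for every $d\ge 0$. The underlying graph is the infinite path $(1,0)-(2,0)-(1,1)-(2,1)-\cdots$, so $Q_G$ is a single infinite component. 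Your proposed potential $\psi$ cannot exist here precisely because the degree jump along an edge over $1\to 2$ is sometimes $0$ and sometimes $1$; the ``consistency'' you hoped to verify genuinely fails rather than merely requiring more work. The argument does succeed in all of the paper's worked examples --- there the arrow space is the natural $\GL_n$-module, homogeneous of degree $1$, and one may take $\psi$ to be any height function on $Q$ --- but some such homogeneity hypothesis on the $R_{uv}$ is needed for the proposition to hold.
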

\begin{proof}
We recall that $Q$ has no oriented cycles if and only if we can totally order the vertices of $Q$ such that $u<v$ if there is an arrow $u\to v$.
Now for a given component $Q_c$, we can totally order the vertices in $Q_c$ by $(u,\rho)<(v,\sigma)$ if $u<v$.
Note that $u<v$ is a necessary condition for having an arrow $(u,\rho)\to(v,\sigma)$.
Since $Q$ is finite, the linear span of arrows is a $G$-module of bounded degree.
So for each $e_u\otimes e_\rho$, it can be connected to only finitely many $e_v\otimes e_\sigma$ (by finitely many arrows).
But $Q$ has finitely many vertices in some total order, so the component containing $e_u\otimes e_\rho$ must be finite as well.
\end{proof}

We fix a connected component $Q_c$ of $Q_G$. Let $A:=kQ$ and $B:=kQ_c$.
Let $T_c: \module A\to \module B$ be the functor $e(A[\M_G]^*\otimes_A -)$ followed by the restriction to $Q_c$.
Let $R_c: \module B\to \module A$ be the functor $\Hom_{Q_c}(eA[\M_G]^*,-)$ followed by the restriction to $A$.
It is right adjoint to $T_c$, and can be lifted to an algebraic (in fact linear) morphism $\Rep_\beta(Q_c)\to \Rep_{r_c(\beta)}(Q)$ using a method similar to that in Section \ref{ss:Q_G_finite}.

\begin{example} For each finite quiver $Q$, we can associate a torus $T_1=(k^*)^{Q_1}$ acting naturally on $kQ_1$.
The irreducible representations of $T_1$ are all one-dimensional indexed by the weight lattice $\mathbb{Z}^{Q_1}$.
So the quiver $Q_G$ from our recipe is the {\em universal abelian covering quiver} of $Q$ (due to M. Reineke, see \cite[Section 3.1]{W}).
\end{example}

\begin{example} \label{ex:n=2}
Let $K_n$ be the $n$-arrow Kronecker quiver. The general linear group $\GL_n$ acts naturally on the arrow space of $K_n$.
This induces an action of $\GL_n$ on $kK_n$. The dimension of $\Hom_G(V_\rho, R_{uv}\otimes V_\sigma)$ is equal to the Littlewood-Richardson coefficients $c_{\sigma,(1)}^{\rho}$.

For any $n\geq 2$, the first component of $Q_G$ is always the following quiver.
$$\skewKthreetwo{\ydiagram{1,1}}{\ydiagram{2}}{\ydiagram{1}}$$
We can easily compute the functor $R_c$ using Example \ref{ex:d2}.
For $n=3$, the functor $R_c$ takes a representation of the above quiver to the following representation of $K_3$.
\begin{align*}
A_1=\sm{0&-B_1&0\\0&0&B_1\\0&0&0\\0&B_2&0\\0&0&B_2\\0&0&0\\B_2&0&0\\0&0&0\\0&0&0},
A_2=\sm{B_1&0&0\\0&0&0\\0&0&-B_1\\B_2&0&0\\0&0&0\\0&0&B_2\\0&0&0\\0&B_2&0\\0&0&0},
A_3=\sm{0&0&0\\-B_1&0&0\\0&B_1&0\\0&0&0\\B_2&0&0\\0&B_2&0\\0&0&0\\0&0&0\\0&0&B_2}.
\end{align*}

We observed that as the above situation, the matrices obtained are quite sparse.
We consider representation of them in three (one-dimensional) arrays, namely, the top row for values, the middle row for row numbers, and the bottom row for column numbers.
For example, the $A_1$ above is the block matrix $\sm{A_{1u}\\A_{1d}}$, where $A_{1u}=\varsm{-1&1\\1&2\\2&3}B_1$ and
$A_{1d}=\varsm{1&1&1\\1&2&4\\2&3&1}B_2$.

For $n=4$, the functor $R_c$ takes a representation of the above quiver to the representation $A_i=\sm{A_{iu}\\A_{id}}$ of $K_4$, where
\begin{align*}
A_{1u}&=\varsm{-1&1&1\\1&2&4\\2&3&4}B_1,&
A_{2u}&=\varsm{1&-1&-1\\1&3&5\\1&3&4}B_1, &
A_{3u}&=\varsm{-1&1&1\\2&3&6\\1&2&4}B_1, &
A_{4u}&=\varsm{-1&1&-1\\4&5&6\\1&2&3}B_1; \\
A_{1d}&=\varsm{1&1&1&1\\1&2&4&7\\2&3&4&1}B_2, &
A_{2d}&=\varsm{1&1&1&1\\1&3&5&8\\1&3&4&2}B_2, &
A_{3d}&=\varsm{1&1&1&1\\2&3&6&9\\1&2&4&3}B_2, &
A_{4d}&=\varsm{1&1&1&1\\4&5&6&10\\1&2&3&4}B_2.\end{align*}
\end{example}

\begin{example} \label{ex:n=3}
For $n\geq 3$, the second component of $Q_G$ is the following quiver
$$\skewKthreethree{\ydiagram{1,1,1}}{\ydiagram{2,1}}{\ydiagram{3}}{\ydiagram{1,1}}{\ydiagram{2}}$$
\begin{align*}
\end{align*}
Using Example \ref{ex:d3}, we find that for $n=3$, the functor $R_c$ takes a representation of the above quiver to the following representation of $K_3$.
$$A_i=\begin{pmatrix} A_{iu}&0 \\ A_{il}&A_{ir} \\0&A_{id} \end{pmatrix}\qquad \text{where}$$
\begin{align*}
A_{1u}&=\varsm{1\\1\\3}B_1,& A_{2u}&=\varsm{1\\1\\2}B_1,& A_{3u}&=\varsm{1\\1\\1}B_1;\\
A_{1l}&=\varsm{-2&2&-1\\1&3&7\\1&2&3}B_2,&
A_{2l}&=\varsm{2&2&1&-1\\2&4&7&8\\1&3&2&2}B_2,&
A_{3l}&=\varsm{2&2&1\\5&6&8\\2&3&1}B_2;\\
A_{1r}&=\varsm{1&-2&1&2&\frac{1}{2}&-1\\1&2&3&5&7&8\\1&5&2&6&3&3}B_3,&
A_{2r}&=\varsm{-2&1&-1&-2&\frac{1}{2}&\frac{1}{2}\\1&2&4&6&7&8\\4&1&3&6&2&2}B_3,&
A_{3r}&=\varsm{-2&2&-1&1&-1&\frac{1}{2}\\3&4&5&6&7&8\\4&5&2&3&1&1}B_3;\\
A_{1d}&=\varsm{3&1&1&1&\frac{1}{2}&\frac{1}{8}\\1&4&5&6&8&10\\4&1&5&2&6&3}B_4,&
A_{2d}&=\varsm{3&1&1&\frac{1}{2}&\frac{1}{4}&\frac{1}{8}\\2&4&5&7&9&10\\5&4&1&3&6&2}B_4,&
A_{3d}&=\varsm{3&1&\frac{1}{2}&\frac{1}{2}&\frac{1}{4}&\frac{1}{8}\\3&6&7&8&9&10\\6&4&5&2&3&1}B_4.
\end{align*}
The next connect component is a Dynkin-$E_7$ for $n=3$ and extended-$E_7$ for $n>3$. Other components are all wild quivers.
\end{example}

\begin{example} \label{ex:S2}
As our last example, we still take the quiver $K_3$ but with a different action. We assume that the 3-dimensional space of arrows is the $\GL_2$-module $S^2(k^2)$.
Then the first component of $Q_G$ is  $$\skewKthreetwo{\ydiagram{2,1}}{\ydiagram{3}}{\ydiagram{1}}$$
The functor $R_c$ takes a representation of the above quiver to the following representation of $K_3$.
\begin{align*}
A_1=\sm{0&-B_1\\0&0\\3B_2&0\\0&0\\0&B_2\\0&0},
A_2=\sm{B_1&0\\0&B_1\\0&0\\0&0\\2B_2&0\\0&2B_2},
A_3=\sm{0&0\\-B_1&0\\0&0\\0&3B_2\\0&0\\B_2&0}.
\end{align*}
\end{example}

\section{Application to Tensor Invariants}
Let us briefly recall Schofield's semi-invariants of quiver representations \cite{S1}.
For a fixed dimension vector $\alpha$, the space of all $\alpha$-dimensional representations is
$$\Rep_\alpha(Q):=\bigoplus_{a\in Q_1}\Hom(k^{\alpha(ta)},k^{\alpha(ha)}).$$
The product of general linear group $\GL_\alpha:=\prod_{v\in Q_0}\GL_{\alpha(v)}$ acts on $\Rep_\alpha(Q)$ by the natural base change. This action has a {\em kernel}, which is the multi-diagonally embedded $k^*$.
For any {\em weight} $\sigma \in\mb{Z}^{Q_0}$, we can associate a character of $\GL_\alpha$ still denoted by $\sigma$
$$\left(g(v)\right)_{v\in Q_0}\mapsto\prod_{v\in Q_0} \big(\det g(v)\big)^{\sigma(v)}.$$
We define the subgroup $\GL_\alpha^\sigma$ to be the kernel of the character map. The semi-invariant ring $\SIR_\alpha^\sigma(Q):=k\left[\Rep_\alpha(Q)\right]^{\GL_\alpha^\sigma}$ of weight $\sigma$
is $\sigma$-graded: $\bigoplus_{n\geqslant 0} \SI_\alpha^{n\sigma}(Q)$, where
$$\SI_\alpha^{\sigma}(Q):=\{f\in k\left[\Rep_\alpha(Q)\right]\mid g(f)=\sigma(g)f, \forall g\in\GL_\alpha\}.$$

For any $N\in\Rep_{\beta}(Q)$, we choose some injective resolution of $N$
$$0\to N\to I_0 \to I_1\to 0,$$
and apply the functor $\Hom_Q(M,-)$ for $M\in\Rep_\alpha(Q)$
\begin{equation} \label{eq:canseq} \Hom_Q(M,N)\hookrightarrow\Hom_Q(M,I_0)\xrightarrow{\phi_M^N}\Hom_Q(M,I_1)\twoheadrightarrow\Ext_Q(M,N).\end{equation}

If $\innerprod{\alpha,\beta}_Q=0$, then $\phi_M^N$ is a square matrix. We fix a dual basis of $\Rep_\alpha(Q)$.
Following Schofield \cite{S1}, we define $c(M,N):=\det\phi_M^N$.
It is not hard to see that the definition only differs by a constant for other choices of the injective resolution of $N$.
In particular, we can take the canonical resolution or minimal resolution of $N$.
We can also define $c(M,N)$ using projective resolution of $M$.
Note that $c(M,N)\neq 0$ if and only if $\Hom_Q(M,N)=0$ or, equivalently, $\Ext_Q(M,N)=0$.
We denote $c_N:=c(-,N)$ and dually $c^M:=c(M,-)$.

It is proved in \cite{S1} that $c_N\in\SI_\alpha^{\sigma_\beta^\vee}(Q)$ for $\sigma_\beta^\vee=\innerprod{-,\beta}_Q$, and dually
$c^M\in\SI_\beta^{\sigma_\alpha}(Q)$ for $\sigma_\alpha=-\innerprod{\alpha,-}_Q$.
In fact, $c_N$'s (resp. $c^M$'s) span $\SI_\alpha^{\sigma_\beta^\vee}(Q)$ (resp. $\SI_\beta^{\sigma_\alpha}(Q)$) over the base field $k$ \cite{DW1,SV,DZ}.

Let $G$ be a finite group or an infinite connected reductive group acting polynomially on $kQ$ as automorphisms.
Such an action induces a rational action of $G$ on all representation spaces of $Q$.
We are interested in those semi-invariants which are also semi-invariant under the $G$-action.

\begin{observation} If $N$ is proj-coherently $G$-invariant, then $c_N$ is also semi-invariant under the $G$-action.
\end{observation}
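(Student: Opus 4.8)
The plan is, for each $g\in G$, to relate the value $c_N(M^g)$ of the Schofield semi-invariant at the twisted representation $M^g$ to $c(M,N^{g^{-1}})$ by transporting an injective resolution through the twist auto-equivalence $(-)^{g^{-1}}$, and then to use the $G$-invariance of $N$ to identify $c(-,N^{g^{-1}})$ with $c(-,N)$ up to a scalar that is forced to be a character.

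First I would recall the setup. Since $G$ fixes the idempotents of $kQ$ (for finite $G$ it permutes them, and we take $\alpha$ to be $G$-stable) and stabilizes the arrow span $kQ_1$, the induced $G$-action on $\Rep_\alpha(Q)$ sends the point $M$ to the point representing $M^g$, and $M\mapsto M^g$ is a linear algebraic automorphism of the affine space $\Rep_\alpha(Q)$. On the module category, $(-)^g\colon\module kQ\to\module kQ$ is an equivalence with inverse $(-)^{g^{-1}}$; by the formula $f^g(m)=f(m)$ it is the identity on underlying vector spaces and on underlying linear maps, it preserves dimension vectors, and, being an equivalence, it carries injective modules to injective modules.

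The key step is the identity $c(M^g,N)=c(M,N^{g^{-1}})$. Fix an injective resolution $0\to N\to I_0\xrightarrow{d}I_1\to 0$; applying $(-)^{g^{-1}}$ yields an injective resolution $0\to N^{g^{-1}}\to I_0^{g^{-1}}\xrightarrow{d^{g^{-1}}}I_1^{g^{-1}}\to 0$ of $N^{g^{-1}}$. Applying $\Hom_Q(M^g,-)$ to the first and $\Hom_Q(M,-)$ to the second produces two copies of the sequence \eqref{eq:canseq}; since the twist induces a bijection $\Hom_Q(M^g,I_j)\xrightarrow{\sim}\Hom_Q(M,I_j^{g^{-1}})$ that is the identity on the underlying tuples of linear maps and carries post-composition with $d$ to post-composition with $d^{g^{-1}}$, the maps $\phi_{M^g}^N$ and $\phi_M^{N^{g^{-1}}}$ are literally the same square matrix (they are square because the twist leaves $\innerprod{\alpha,\beta}_Q$ unchanged), hence have equal determinants. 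This uses that $c(M,-)$ is independent of the chosen injective resolution.

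Finally, $N$ proj-coherently $G$-invariant gives in particular $N^{g^{-1}}\cong N$; transporting an injective resolution of $N$ along such an isomorphism affects neither $I_0\xrightarrow{d}I_1$ nor $\phi_M$, so $c(-,N^{g^{-1}})=c(-,N)$ by resolution-independence (at worst they differ by a scalar $\lambda(g)\in k^*$). Combining with the key step, $c_N(M^g)=\lambda(g)\,c_N(M)$ for all $M$. If $c_N=0$ there is nothing to prove; otherwise, evaluating $c_N$ at $(M^h)^g=M^{gh}$ in two ways forces $\lambda\colon G\to k^*$ to be a (rational) character, so $c_N$ is $G$-semi-invariant — indeed $G$-invariant when the comparison of resolutions is taken on the nose. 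The main thing to get right is the equality $\phi_{M^g}^N=\phi_M^{N^{g^{-1}}}$ as matrices, rather than merely up to base change, since that is what licenses passing to determinants; the rest is formal.
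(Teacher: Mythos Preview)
Your proof is correct and takes a genuinely different route from the paper's. The paper argues entirely on the $N$-side: it realizes the isomorphisms $N\cong N^g$ as elements $\varphi(g)\in\GL_\beta$ acting by base change on $\Rep_\beta(Q)$, then invokes the already-known $\GL_\beta$-semi-invariance of $c^M=c(M,-)$ with weight $\sigma_\alpha=\innerprod{\alpha,-}_Q$ to obtain $c_{N^g}(M)=\sigma_\alpha(\varphi(g))\,c_N(M)$; proj-coherence makes $\varphi$ a projective representation, and the hypothesis $\innerprod{\alpha,\beta}_Q=0$ (so $\sigma_\alpha$ kills scalar matrices) forces $\sigma_\alpha\circ\varphi$ to be an honest character. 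You instead transport the injective resolution through the auto-equivalence $(-)^{g^{-1}}$ to get $c(M^g,N)=c(M,N^{g^{-1}})$ on the nose, and then extract multiplicativity of $\lambda$ directly from $(M^h)^g=M^{gh}$. This is more categorical and, as you may have noticed, uses only bare $G$-invariance of $N$; the proj-coherence is never actually invoked in your argument. What the paper's approach buys is an explicit identification of the character as $\sigma_\alpha\circ\varphi$ and a transparent reason, via $\innerprod{\alpha,\beta}_Q=0$, why the projective ambiguity disappears; what yours buys is a cleaner, hypothesis-light argument that in fact establishes the observation under the weaker assumption.
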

\begin{proof} Since $N$ is proj-coherently $G$-invariant, there is some map $\varphi:G\to\GL_\alpha$ such that ${^g N}=\varphi(g)N$ and $\varphi$ descends to a representation $G\to\GL_\alpha/k^*$. Then
$$c_{^g N}(M)=c^M(\varphi(g)N)=\sigma_\alpha(\varphi(g))c^M(N)=(\sigma_\alpha \varphi)(g)c_N(M).$$
Since $\innerprod{\alpha,\beta}_Q=0$, $\sigma_\alpha\mid_{k^*}$ is trivial, so $\sigma_\alpha \varphi$ is a character of $G$.
In other words $c_N$ is semi-invariant under the $G$-action.
\end{proof}

This observation allows us to construct a lot of new semi-invariants for the $\GL_\alpha \times G$-action on $k\left[\Rep_\alpha(Q)\right]$.
According to Observation \ref{ob:rigidSchur}, any exceptional (=rigid Schur) representation is proj-coherently $G$-invariant.
Actually we conjecture that they are all coherently $G$-invariant.
The dimension of such a representation is a {\em real Schur root} $\gamma$ of the quiver. Moreover, for any two general representations $N_1,N_2\in\Rep_\gamma(Q)$, $c_{N_1}$ is a multiple of $c_{N_2}$.
In this sense, we will treat these semi-invariants as trivial, and avoid them later.

We are particularly interested in applying the method to construct the semi-invariants of (tri)-tensors.
By a (tri-)tensor of vector spaces $(U,V,W)$, we mean the vector space $U^*\otimes V\otimes W^*$. The product of special linear groups $SL:=\SL(U)\times \SL(V)\times \SL(W)$ acts naturally on it. We are interested in the invariants in $k[U^*\otimes V\otimes W^*]$ for this action.
The tensor space can be identified with the $(\alpha_1,\alpha_2)$-dimensional representation space of the $n$-arrow Kronecker quiver $K_n$, where $\dim U = \alpha_1, \dim V=\alpha_2,$ and $\dim W =n$. In this context, $G=\GL(W)$.

It follows from Example \ref{ex:n=2} that
\begin{proposition} \label{P:32} For general square matrices $B_1,B_2$, we define the representations $N_1, N_2$ of $K_3$
\begin{align*}
N_1(a_1)&=\sm{0&-B_1&0\\0&0&B_1\\0&0&0},&
N_1(a_2)&=\sm{B_1&0&0\\0&0&0\\0&0&-B_1},&
N_1(a_3)&=\sm{0&0&0\\-B_1&0&0\\0&B_1&0},&\\
N_2(a_1)&=\sm{0&B_2&0\\0&0&B_2\\0&0&0\\B_2&0&0\\0&0&0\\0&0&0},&
N_2(a_2)&=\sm{B_2&0&0\\0&0&0\\0&0&B_2\\0&0&0\\0&B_2&0\\0&0&0},&
N_2(a_3)&=\sm{0&0&0\\B_2&0&0\\0&B_2&0\\0&0&0\\0&0&0\\0&0&B_2}.&
\end{align*}
Then $c_{N_1}$ (resp. $c_{N_2}$) is a semi-invariant function for the tensor of size $a\times 2a\times 3$ (resp. $a\times a\times 3$).
\end{proposition}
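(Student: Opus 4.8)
The plan is to recognize $N_1$ and $N_2$ as values of the algebraically lifted functor $R_c$ attached to the first component $Q_c$ of $Q_G$ in the case $Q=K_3$, $G=\GL_3$, and then to run the mechanism behind the Observation just before the proposition. By Example~\ref{ex:n=2}, $Q_c$ is the $A_3$ quiver $\ydiagram{1,1}\xrightarrow{b_1}\ydiagram{1}\xleftarrow{b_2}\ydiagram{2}$ (the partitions $\ydiagram{1,1},\ydiagram{2}$ sitting over vertex $1$ of $K_3$ and $\ydiagram{1}$ over vertex $2$), and the lift $R_c\colon\Rep_\beta(Q_c)\to\Rep_{r_c(\beta)}(K_3)$ is written out there. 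Let $N_1'$ be the representation of $Q_c$ of dimension vector $(a,0;a)$ at $(\ydiagram{1,1},\ydiagram{2};\ydiagram{1})$ carrying the generic $a\times a$ matrix $B_1$ on $b_1$ and $0$ on $b_2$, and let $N_2'$ be the representation of dimension vector $(0,a;a)$ carrying the generic $a\times a$ matrix $B_2$ on $b_2$ and $0$ on $b_1$; each $N_i'$ is a direct sum of $a$ copies of an indecomposable of a Dynkin quiver, hence rigid.

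First I would do the bookkeeping. Using $\dim V_{\ydiagram{1,1}}=\dim V_{\ydiagram{1}}=3$ and $\dim V_{\ydiagram{2}}=6$ for $\GL_3$, the induced map on dimension vectors sends $(a,0;a)\mapsto(3a,3a)$ and $(0,a;a)\mapsto(6a,3a)$; comparing with the explicit matrices for $R_c$ in Example~\ref{ex:n=2} — keeping only the $B_1$-rows, resp.\ the $B_2$-rows, of the listed $A_i$ — one reads off $R_c(N_1')=N_1$ and $R_c(N_2')=N_2$. Since the Euler form of $K_3$ satisfies $\innerprod{(a,2a),(3a,3a)}_{K_3}=0$ and $\innerprod{(a,a),(6a,3a)}_{K_3}=0$, the Schofield semi-invariants $c_{N_1}\in k[\Rep_{(a,2a)}(K_3)]$ and $c_{N_2}\in k[\Rep_{(a,a)}(K_3)]$ are defined, and these representation spaces are exactly the tri-tensor spaces of sizes $a\times 2a\times 3$ and $a\times a\times 3$. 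Next, by the construction of $R_c$ the representation $R_c(N_i')$ is the restriction to $kK_3$ of a $kK_3[\M_3]^*$-module, so Lemma~\ref{L:A[M_G]-mod} shows $N_1$ and $N_2$ are coherently — in particular proj-coherently — $\GL_3$-invariant. The Observation preceding Proposition~\ref{P:32} then makes $c_{N_1}$ and $c_{N_2}$ semi-invariant for the $\GL(W)=\GL_3$-action, and together with Schofield's theorem for the $\GL_\alpha$-action they are semi-invariant for $\GL_\alpha\times\GL(W)$, which is the claim.

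The only non-formal point is that $c_{N_1}$ and $c_{N_2}$ do not vanish identically; this is where the content lies, since $(3a,3a)$ and $(6a,3a)$ are imaginary roots of $K_3$, so these are not among the semi-invariants of exceptional representations that were declared trivial above. I would establish nonvanishing via the adjunction $T_c\dashv R_c$, which gives $\Hom_{K_3}(M,N_i)\cong\Hom_{Q_c}(T_cM,N_i')$: since $N_1'=aE$ with $E$ the indecomposable of dimension vector $(1,0;1)$ and $\Hom_{Q_c}(X,E)\cong(\Coker X(b_2))^*$ (and symmetrically for $N_2'$, with $b_1$ in place of $b_2$), it suffices to check that $b_2$, resp.\ $b_1$, acts surjectively on $T_cM$ for a general $M$. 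That reduces to a description of the lifted functor $T_c$ on a general module, which I expect to be the main obstacle. A workaround that avoids $T_c$ is to settle the case $a=1$ by exhibiting a single $M$ with $\Hom_{K_3}(M,N_i)=0$ and then pass to general $a$ using $c_{nL}(M)=c_L(M)^n$ together with $c_N(nM)=c_N(M)^n$. Everything else — the matrix comparison against Example~\ref{ex:n=2} and the two Euler-form identities — is routine.
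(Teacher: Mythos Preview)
Your argument is correct and coincides with the paper's: the proposition is stated there as a direct consequence of Example~\ref{ex:n=2} together with the Observation, which is exactly what your first two paragraphs spell out in detail (identifying $N_i=R_c(N_i')$, checking the dimension vectors and Euler-form identities, and invoking Lemma~\ref{L:A[M_G]-mod}).

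Your third paragraph on nonvanishing goes beyond what the proposition literally asserts. The paper does treat that question, but separately, as an illustration after Theorem~\ref{T:SIadj}: it explicitly computes $T_c(M)$ for a general $M\in\Rep_{(1,2)}(K_3)$, decomposes it as $3(M_1\oplus M_2)$ with $M_1,M_2$ general of dimensions $(0,1,1)$ and $(1,1,1)$, and reads off $\Hom_{Q_c}(T_c(M),N_1')=0$ --- precisely the computation you flag as ``the main obstacle'' --- rather than via the $a=1$ reduction you sketch as a workaround. So your adjunction route is in fact the paper's route; it is carried out there rather than left as an expected difficulty.
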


\begin{proposition} \label{P:42}
For general square matrices $B_1,B_2$, we define the representations $N_1,N_2$ of $K_4$ by $A_{iu},A_{id}$ as in Example \ref{ex:n=2},
then
$c_{N_1}$ (resp. $c_{N_2}$) is a semi-invariant function for the tensor of size $2a\times 5a\times 4$ (resp. $2a\times 3a\times 4$).
\end{proposition}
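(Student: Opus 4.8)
The plan is to realize $N_1$ and $N_2$ as images under the functor $R_c$ of two very simple representations of the first component $Q_c$ of $(K_4)_{\GL_4}$, and then to run the formalism of Section~4; the argument runs parallel to the one behind Proposition~\ref{P:32}. Recall from Example~\ref{ex:n=2} that $Q_c$ is the (oriented) $A_3$-quiver with source vertices $\ydiagram{1,1},\ydiagram{2}$ lying over vertex~$1$ of $K_4$, sink vertex $\ydiagram{1}$ over vertex~$2$, and arrows $b_1\colon\ydiagram{1,1}\to\ydiagram{1}$, $b_2\colon\ydiagram{2}\to\ydiagram{1}$. I would let $N_1'$ be the representation of $Q_c$ with $\ydiagram{1,1}$- and $\ydiagram{1}$-spaces equal to $k^a$, $\ydiagram{2}$-space zero, and $b_1=B_1$ a general square matrix, and $N_2'$ the representation with $\ydiagram{2}$- and $\ydiagram{1}$-spaces equal to $k^a$, $\ydiagram{1,1}$-space zero, and $b_2=B_2$ general. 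Substituting into the explicit lift $\Rep(Q_c)\to\Rep(K_4)$ displayed in Example~\ref{ex:n=2}, the blocks $A_{id}$ (resp.\ $A_{iu}$) contribute no rows, so one reads off $R_c(N_1')=N_1$ and $R_c(N_2')=N_2$.

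Next I would record the dimension vectors. By Lemma~\ref{L:k[M_G]} the vertices of $Q_c$ carry the polynomial $\GL_4$-modules $\wedge^2 k^4,\ S^2 k^4,\ k^4$, of dimensions $6,10,4$; hence $\dim N_1=(6a,4a)$ and $\dim N_2=(10a,4a)$. A one-line computation with the Euler form $\innerprod{(\alpha_1,\alpha_2),(\beta_1,\beta_2)}_{K_4}=\alpha_1\beta_1+\alpha_2\beta_2-4\alpha_1\beta_2$ gives $\innerprod{\alpha,\dim N_1}_{K_4}=0\iff\alpha\in\mb{Z}(2,5)$ and $\innerprod{\alpha,\dim N_2}_{K_4}=0\iff\alpha\in\mb{Z}(2,3)$. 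Therefore $c_{N_1}$ is defined on $\Rep_{(2a,5a)}(K_4)$ and $c_{N_2}$ on $\Rep_{(2a,3a)}(K_4)$, and under the identification of these representation spaces with $U^*\otimes V\otimes W^*$ they are exactly the tensor spaces of the stated sizes, with $\GL_\alpha\times G=\GL_{2a}\times\GL_{5a}\times\GL_4$ (resp.\ $\GL_{2a}\times\GL_{3a}\times\GL_4$).

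For the semi-invariance I would combine two facts. By Schofield's theorem (recalled in Section~4), $c_{N_i}$ is a semi-invariant for the base-change action of $\GL_\alpha$. On the other hand, since $R_c$ is by definition $\Hom_{Q_c}(eA[\M_G]^*,-)$ followed by restriction to $A=kK_4$ (here $G=\GL_4$), each $N_i=R_c(N_i')$ underlies a $kK_4\#k[\M_G]^*$-module, hence by Lemma~\ref{L:A[M_G]-mod} is coherently — and in particular proj-coherently — $G$-invariant; the Observation of Section~4 (proj-coherent $G$-invariance of $N$ implies $c_N$ is a $G$-semi-invariant) then applies. Combining, $c_{N_1}$ (resp.\ $c_{N_2}$) is a semi-invariant for the full $\GL_\alpha\times\GL_4$-action on the corresponding tensor space, which is the assertion.

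I expect the one genuinely non-formal point, and the main obstacle, to be checking that $c_{N_i}\not\equiv 0$, so that these really are nontrivial semi-invariants. Since $\innerprod{\alpha,\dim N_i}_{K_4}=0$ forces $\dim\Hom_{K_4}(M,N_i)=\dim\Ext_{K_4}(M,N_i)$, nonvanishing amounts to producing some $M$ of the prescribed dimension with no nonzero map to $N_i$. Taking $B_i$ to be an identity matrix gives $N_i(I_a)\cong a\,N_i(I_1)$, so by multiplicativity of the Schofield determinant on direct sums it is enough to verify $c_{N_i(I_1)}\neq 0$ on $\Rep_{(2,5)}(K_4)$ (resp.\ $\Rep_{(2,3)}(K_4)$) — a finite linear-algebra check with four explicit $6\times 4$ (resp.\ $10\times 4$) matrices — after which openness of the non-vanishing locus covers general $B_i$ and passing to a block-diagonal $M$ covers general $a$. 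Finally I would note that these semi-invariants are not of the trivial real-Schur-root type set aside in Section~4: the Tits form of $K_4$ is negative on both $(6a,4a)$ and $(10a,4a)$, so these dimension vectors lie in the imaginary cone.
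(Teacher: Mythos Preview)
Your proof is correct and follows exactly the route the paper intends: Propositions~\ref{P:32}--\ref{P:233} are stated in the paper without separate proofs, as immediate consequences of the constructions in Examples~\ref{ex:n=2}--\ref{ex:S2} together with the Observation in Section~4, and your write-up simply makes that implicit argument explicit (dimension-vector bookkeeping, Euler-form check, and the appeal to Lemma~\ref{L:A[M_G]-mod} for coherent $G$-invariance). The one point where your approach differs slightly from the paper is the nonvanishing check: you propose a direct finite linear-algebra verification for $a=1$ followed by multiplicativity, whereas the paper handles this via Theorem~\ref{T:SIadj} (reducing to the tame $A_3$-component $Q_c$) and a computer check, as noted in the final paragraph of Section~4.
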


\begin{proposition} \label{P:33}
For general square matrices $B_1,B_2,B_3,B_4$, we define the representations $N_1,N_2$ of $K_3$ by $A_{ir},A_{id}$ as in Example \ref{ex:n=3},
then
$c_{N_1}$ (resp. $c_{N_2}$) is a semi-invariant function for the tensor of size $3a\times 5a\times 3$ (resp. $3a\times 4a\times 3$).

We define the representation $N_3,N_4,N_5$ of $K_3$ by
\begin{align*} N_3(a_i)=\sm{A_{il}&A_{ir}},
N_4(a_i)=\sm{A_{il}&A_{ir} \\0&A_{id}},
N_5(a_i)=\sm{A_{iu}&0 \\ A_{il}&A_{ir} \\0&A_{id}}.
\end{align*}
Then
$c_{N_3}$ (resp. $c_{N_4},c_{N_5}$) is a semi-invariant function for the tensor of size $9a\times 19a\times 3$ (resp. $8a\times 9a\times 3$, $a\times a\times 3$).
\end{proposition}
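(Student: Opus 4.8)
The plan is to realize each of $N_1,\dots,N_5$ as the value of the functor $R_c$ on an explicit representation of the quiver $Q_c$ of Example \ref{ex:n=3}, and then apply the semi-invariance mechanism set up in this section. Recall that $Q_c$ is the second connected component of $Q_G$ for $K_3$ under its natural $\GL_3$-action: by Example \ref{ex:n=3} it is a quiver of Dynkin type $A_5$ whose vertices are the partitions $(1,1,1),(2,1),(3)$ of $3$ (covering vertex $1$ of $K_3$) and $(1,1),(2)$ of $2$ (covering vertex $2$), and the lift of $R_c$ sends a representation of $Q_c$ with arrow matrices $B_1,B_2,B_3,B_4$ to the representation $A_i=\sm{A_{iu}&0\\A_{il}&A_{ir}\\0&A_{id}}$ of $K_3$ displayed there.

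First I would identify the $N_i$. Let $N^{(i)}$ denote the representation of $Q_c$ in which every occupied vertex has dimension $a$ and the only arrows permitted to be nonzero are $b_3$ (for $i=1$), $b_4$ ($i=2$), $b_2$ and $b_3$ ($i=3$), $b_2,b_3,b_4$ ($i=4$), and all four ($i=5$). Setting the remaining arrow matrices to zero annihilates those blocks of $A_i$ not built from the retained arrows and removes the corresponding zero row/column blocks, so $R_c(N^{(i)})$ is precisely the representation $N_i$ of the statement. In particular every $N_i$ lies in the image of $R_c$.

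Next, by Theorem \ref{T:Q_G} the functor $R_c$ factors through the restriction functor $\module\, kK_3\#k[\M_G]^*\to\module\, kK_3$ with $G=\GL_3$, so $N_i=R_c(N^{(i)})$ underlies a $kK_3\#k[\M_G]^*$-module; by Lemma \ref{L:A[M_G]-mod} this makes $N_i$ coherently $G$-invariant, hence proj-coherently $G$-invariant. The Observation preceding Proposition \ref{P:32} then shows that $c_{N_i}$ is a $G$-semi-invariant on $\Rep_\alpha(K_3)$ for every $\alpha$ with $\innerprod{\alpha,\underline{\dim}\,N_i}_{K_3}=0$, and since $c_{N_i}$ is already a $\GL_\alpha$-semi-invariant by Schofield \cite{S1} (see also \cite{DW1,SV}), it is a semi-invariant for the full $\GL_\alpha\times\GL(W)$-action, i.e.\ of the tensor $U^*\otimes V\otimes W^*$ with $\dim(U,V,W)=(\alpha_1,\alpha_2,3)$. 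It then remains to do the bookkeeping: with $\dim V_{(1,1,1)}=1$, $\dim V_{(2,1)}=8$, $\dim V_{(3)}=10$, $\dim V_{(1,1)}=3$, $\dim V_{(2)}=6$, the block sizes give $\underline{\dim}\,N_1=(8a,6a)$, $\underline{\dim}\,N_2=(10a,6a)$, $\underline{\dim}\,N_3=(8a,9a)$, $\underline{\dim}\,N_4=(18a,9a)$ and $\underline{\dim}\,N_5=(19a,9a)$, and solving $\innerprod{\alpha,\beta}_{K_3}=\alpha_1\beta_1+\alpha_2\beta_2-3\alpha_1\beta_2=0$ with $\beta=\underline{\dim}\,N_i$ yields the asserted tensor sizes; that the resulting semi-invariants are nonzero amounts to checking that a general $M\in\Rep_\alpha(K_3)$ satisfies $\Hom_{K_3}(M,N_i)=0$, equivalently (because $\innerprod{\alpha,\beta}_{K_3}=0$) $\Ext^1_{K_3}(M,N_i)=0$, i.e.\ that $\alpha$ is perpendicular to $\underline{\dim}\,N_i$ in Schofield's sense, which one reads off the canonical decompositions.

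The main obstacle is the first step — certifying that the matrices displayed in the statement really are what the lifted functor $R_c$ outputs. This needs complete sets of primitive orthogonal idempotents of the Schur algebras $S(3,2)$ and $S(3,3)$ (Examples \ref{ex:d2} and \ref{ex:d3}), the multiplication rule in $S(3,d)$, and the explicit computation of the structure constants $c_k^{ij}$ of Section \ref{ss:functors}; it is finite but lengthy, and is precisely the calculation already recorded in Example \ref{ex:n=3}. Everything afterwards is an immediate application of the Observation together with the elementary Euler-form identity above.
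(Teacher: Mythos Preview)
Your approach is exactly the (implicit) one in the paper: the proposition is stated without a separate proof because each $N_i$ is by construction the restriction to $K_3$ of a $kQ_c$-module via the functor $R_c$ of Example \ref{ex:n=3}, hence coherently $G$-invariant (Lemma \ref{L:A[M_G]-mod}), and then the Observation gives the $G$-semi-invariance of $c_{N_i}$; the tensor size is read off from the Euler-form condition $\innerprod{\alpha,\underline{\dim}\,N_i}_{K_3}=0$. Your elaboration of this is correct, including the identification of the supporting subquivers and the dimension bookkeeping.

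One small caution: you assert that solving $\innerprod{\alpha,\beta}_{K_3}=0$ ``yields the asserted tensor sizes'' for all five cases, but if you actually carry it out with your (correct) values $\underline{\dim}\,N_4=(18a,9a)$ and $\underline{\dim}\,N_5=(19a,9a)$, you get $\alpha=(1,1)$ and $\alpha=(9,8)$ respectively, not $(8,9)$ and $(1,1)$ as printed in the statement; the two sizes appear to be swapped (and transposed) in the proposition. This is a typo in the paper, not a defect in your argument. Also, the nonzeroness of $c_{N_i}$ is not part of the statement, so your final sentence about canonical decompositions, while relevant context, is not needed for the proof.
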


We remark that our construction also applies to the case when the third factor $W$ is another representation of $\GL(W)$.
\begin{proposition} \label{P:233} For general square matrices $B_1,B_2$, we define the representations $N_1, N_2$ of $K_3$ (see Example \ref{ex:S2})
\begin{align*}
N_1(a_1)&=\sm{0&B_1\\0&0},&
N_1(a_2)&=\sm{B_1&0\\0&B_1},&
N_1(a_3)&=\sm{0&0\\-B_1&0},&\\
N_2(a_1)&=\sm{3B_2&0\\0&0\\0&B_2\\0&0},&
N_2(a_2)&=\sm{0&0\\0&0\\2B_2&0\\0&2B_2},&
N_2(a_3)&=\sm{0&0\\0&3B_2\\0&0\\B_2&0}.&
\end{align*}
Then $c_{N_1}$ (resp. $c_{N_2}$) is a semi-invariant function in $k[U^*\otimes V\otimes S^2(W)^*]$ for $\dim(U,V,W)=(a,2a,2)$ (resp. $(a,a,2)$).
\end{proposition}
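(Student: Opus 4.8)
The plan is to deduce this from the explicit functor $R_c$ recorded in Example~\ref{ex:S2} together with the Observation that $c_N$ is $G$-semi-invariant whenever $N$ is proj-coherently $G$-invariant; this is exactly how Propositions~\ref{P:32}--\ref{P:33} follow from Examples~\ref{ex:n=2}--\ref{ex:n=3}. Here $G=\GL(W)=\GL_2$ acts on $kK_3$ through the $\GL_2$-module $S^2(k^2)$ on the arrow span; the action is polynomial, and the image of $\GL_2\to\GL_3$, $g\mapsto S^2(g)$, is stable under scaling by squares, so its closure $\M_G$ is a cone and therefore has homogeneous vanishing ideal. By Proposition~\ref{P:criterionGPRT}, $\GL_2$ admits a graded polynomial representation theory, so the machinery of Section~3 applies; the relevant component is the quiver $Q_c$ displayed in Example~\ref{ex:S2}, with arrows $b_1\colon(2,1)\to(1)$, $b_2\colon(3)\to(1)$ and the $\GL_2$-irreducibles $S^{(2,1)}, S^{(3)}, S^{(1)}$ of dimensions $2,4,2$ at its three vertices.

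The first step is to recognize $N_1$ and $N_2$ as values of $R_c$. Let $N'$ be the representation of $Q_c$ of dimension $a$ at the vertices $(2,1)$ and $(1)$, dimension $0$ at $(3)$, carrying the general $a\times a$ matrix $B_1$ on the arrow $b_1$; and let $N''$ be the one of dimension $a$ at $(3)$ and $(1)$, dimension $0$ at $(2,1)$, carrying $B_2$ on $b_2$. Since $N'_{(3)}=0$, only the $S^{(2,1)}$-isotypic copies at vertex $1$ of $K_3$ survive in the lifted formula for $R_c(N')$, so $R_c(N')$ is read off from the first two block rows of the matrices $A_1,A_2,A_3$ of Example~\ref{ex:S2} and equals $N_1$ (up to the sign of one generator, which is immaterial); similarly $R_c(N'')$ is read off from the last four block rows and equals $N_2$. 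In particular $\beta_1:=\underline{\dim}\,N_1=(2a,2a)$ and $\beta_2:=\underline{\dim}\,N_2=(4a,2a)$.

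Since $R_c$ is the functor $\Hom_{Q_c}(eA[\M_G]^*,-)$ (which takes values in $\module A[\M_G]^*$) followed by restriction to $A=kK_3$, the $K_3$-representations $N_1$ and $N_2$ underlie $kK_3[\M_G]^*$-modules; by Lemma~\ref{L:A[M_G]-mod} and the discussion after it this makes them coherently, hence proj-coherently, $G$-invariant. By the Observation, $c_{N_1}$ and $c_{N_2}$ are semi-invariant under the $\GL(W)$-action; and since Schofield's theorem already places $c_{N_i}$ in $\SI_\alpha^{\sigma_{\beta_i}^\vee}(K_3)$, each $c_{N_i}$ is a semi-invariant for the full $\GL_\alpha\times\GL(W)$-action on $k[\Rep_\alpha(K_3)]$, for the appropriate dimension vector $\alpha$ determined below.

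It remains to compute $\alpha$. For $c_{N_i}$ to be a function on $\Rep_\alpha(K_3)$ one needs $\innerprod{\alpha,\beta_i}_{K_3}=0$, and with $\innerprod{(x_1,x_2),(y_1,y_2)}_{K_3}=x_1y_1+x_2y_2-3x_1y_2$ this gives $\alpha=(a,2a)$ for $i=1$ and $\alpha=(a,a)$ for $i=2$ (valid for every $a$). Under the identification $\Rep_{(\alpha_1,\alpha_2)}(K_3)\cong U^*\otimes V\otimes S^2(W)^*$ with $\dim U=\alpha_1$, $\dim V=\alpha_2$, $\dim W=2$, this says precisely that $c_{N_1}$ is a semi-invariant on $U^*\otimes V\otimes S^2(W)^*$ with $\dim(U,V,W)=(a,2a,2)$ and $c_{N_2}$ one with $\dim(U,V,W)=(a,a,2)$, as claimed. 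The only step that is not a formal consequence of earlier results is the first one --- matching $N_1,N_2$ block-by-block with the output of $R_c$ in Example~\ref{ex:S2}, where signs and the paper's row-vector convention must be tracked with care; (non-)vanishing of the $c_{N_i}$, if wanted, can then be decided by exhibiting one $M$ with $\Hom_{K_3}(M,N_i)=0$.
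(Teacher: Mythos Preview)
Your argument is correct and is precisely the route the paper intends: Proposition~\ref{P:233} carries no separate proof in the text and is meant to follow from Example~\ref{ex:S2} together with the Observation on proj-coherently $G$-invariant modules, exactly as Propositions~\ref{P:32}--\ref{P:33} follow from Examples~\ref{ex:n=2}--\ref{ex:n=3}. Your identification of $N_1,N_2$ with $R_c(N'),R_c(N'')$, the dimension count $2,4,2$ for $S^{(2,1)},S^{(3)},S^{(1)}$, and the Euler-form computation of $\alpha$ are all in order; the sign discrepancy you flag between $N_1(a_1)$ and the top block of $A_1$ in Example~\ref{ex:S2} is indeed immaterial (absorbed by rescaling a basis vector). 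One minor simplification: you need not pass to the image of $\GL_2$ in $\GL_3$ to verify the graded polynomial hypothesis---$G=\GL_2$ itself, viewed inside $\GL_2$, already contains the scalars, and the action on $kK_3$ via $S^2$ is a degree-$2$ polynomial representation, so the framework of Section~3 applies directly with $\M_G=\M_2$.
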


Fix a component $Q_c$ of $Q_G$. Let $\SI_\alpha^{\sigma_{R_c(\beta)}^\vee}(Q)$ be the vector space spanned by semi-invariants on $\Rep_\alpha(Q)$ of form $c_{R_c(N)}$ for $N\in\Rep_\beta(Q_c)$. On the other hand, we can restrict a semi-invariant $c_N\in \SI_{r_c(\beta)}^{\sigma_\alpha}(Q)$ on the subvariety $R_c(\Rep_\beta(Q_c))$.
We denote the linear span of these restricted semi-invariants by $\SI_{R_c(\beta)}^{\sigma_\alpha}(Q)$.
Similarly to \cite[Corollary 1]{DW1}, we have the following reciprocity property
\begin{proposition} \label{P:reciprocity} $\dim\SI_\alpha^{\sigma_{R_c(\beta)}^\vee}(Q)=\dim\SI_{R_c(\beta)}^{\sigma_\alpha}(Q).$
\end{proposition}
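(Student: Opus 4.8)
The plan is the one indicated by the paper: to realize both sides as the two ``slice spans'' of a single regular function on a product of representation spaces, and then to invoke the elementary fact (the content of \cite[Corollary 1]{DW1}) that these two spans have equal dimension, namely the tensor rank of the function. Throughout I assume $\innerprod{\alpha,R_c(\beta)}_Q=0$, so that $c_{R_c(N)}$ and the $c^M$ are honest semi-invariants. First I would record that $c$ is a regular function on $\Rep_\alpha(Q)\times\Rep_{r_c(\beta)}(Q)$: computing $c(M,N')$ from the standard projective resolution of $M$ presents it as the determinant of a square matrix whose entries are bilinear in the arrow-matrices of $M$ and of $N'$, so $c\in k[\Rep_\alpha(Q)]\otimes k[\Rep_{r_c(\beta)}(Q)]$ (this is the joint-regularity underlying Schofield's construction in \cite{S1}). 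Recalling that $R_c$ lifts to an algebraic morphism $\Rep_\beta(Q_c)\to\Rep_{r_c(\beta)}(Q)$, composition with $\Id\times R_c$ yields a regular function
$$\tilde c\in k[\Rep_\alpha(Q)]\otimes k[\Rep_\beta(Q_c)],\qquad \tilde c(M,N)=c\bigl(M,R_c(N)\bigr).$$

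Next I would unwind the two definitions in terms of $\tilde c$. Since $c_{R_c(N)}(M)=c(M,R_c(N))=\tilde c(M,N)$, we have $c_{R_c(N)}=\tilde c(-,N)$ as an element of $k[\Rep_\alpha(Q)]$, so by definition $\SI_\alpha^{\sigma_{R_c(\beta)}^\vee}(Q)$ is exactly the span $\bigl\langle\,\tilde c(-,N)\ \big|\ N\in\Rep_\beta(Q_c)\,\bigr\rangle$ inside $k[\Rep_\alpha(Q)]$. For the other side, by the spanning theorem \cite{DW1,SV} the space $\SI_{r_c(\beta)}^{\sigma_\alpha}(Q)$ is spanned by the $c^M$ with $M\in\Rep_\alpha(Q)$, hence $\SI_{R_c(\beta)}^{\sigma_\alpha}(Q)=\bigl\langle\,c^M|_{R_c(\Rep_\beta(Q_c))}\ \big|\ M\in\Rep_\alpha(Q)\,\bigr\rangle$. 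Because $R_c$ maps $\Rep_\beta(Q_c)$ onto $R_c(\Rep_\beta(Q_c))$, the precomposition map $g\mapsto g\circ R_c$ is injective on functions defined on $R_c(\Rep_\beta(Q_c))$, and it sends $c^M|_{R_c(\Rep_\beta(Q_c))}$ to $c^M\circ R_c=\tilde c(M,-)\in k[\Rep_\beta(Q_c)]$; therefore $\dim\SI_{R_c(\beta)}^{\sigma_\alpha}(Q)=\dim\bigl\langle\,\tilde c(M,-)\ \big|\ M\in\Rep_\alpha(Q)\,\bigr\rangle$.

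It then remains to prove the purely linear-algebraic lemma. Let $t\in V\otimes W$, where $V,W$ are spaces of $k$-valued functions on sets $X,Y$ respectively (so that linear independence in $V$, resp.\ $W$, is the same as linear independence of the corresponding functions --- automatic here since $k[\Rep_\alpha(Q)]$ and $k[\Rep_\beta(Q_c)]$ are rings of honest polynomial functions and $k$ is infinite). Write $t=\sum_{i=1}^r p_i\otimes q_i$ with $r$ minimal; minimality forces $\{p_i\}$ and $\{q_i\}$ each to be linearly independent. From $t(-,y)=\sum_i q_i(y)\,p_i$ we get $\langle\,t(-,y)\mid y\in Y\,\rangle\subseteq\langle p_1,\dots,p_r\rangle$, and equality holds because the vectors $\bigl(q_1(y),\dots,q_r(y)\bigr)\in k^r$ span $k^r$: a covector annihilating all of them would give a nontrivial $k$-linear relation among the functions $q_i$. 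Hence $\dim\langle\,t(-,y)\mid y\in Y\,\rangle=r$, and by the symmetric argument $\dim\langle\,t(x,-)\mid x\in X\,\rangle=r$ as well. Applying this to $t=\tilde c$ gives $\dim\SI_\alpha^{\sigma_{R_c(\beta)}^\vee}(Q)=r=\dim\SI_{R_c(\beta)}^{\sigma_\alpha}(Q)$. The proof is essentially bookkeeping; the only non-formal input is the spanning theorem of \cite{DW1,SV} used to identify $\SI_{r_c(\beta)}^{\sigma_\alpha}(Q)$ with the span of the $c^M$ (and it can be dispensed with entirely if one simply \emph{defines} $\SI_{R_c(\beta)}^{\sigma_\alpha}(Q)$ to be the span of the restrictions of the $c^M$). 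I expect the one point to state carefully, rather than an obstacle, is the joint regularity of $c$, since it is precisely what guarantees that $\tilde c$ lies in the \emph{algebraic} tensor product and thus admits the finite expansion $\sum_i p_i\otimes q_i$ on which the tensor-rank argument rests.
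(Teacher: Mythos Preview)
Your proposal is correct and follows exactly the approach the paper intends: the paper gives no proof of its own but points to \cite[Corollary 1]{DW1}, and what you have written is precisely that tensor-rank argument carried out for the pulled-back function $\tilde c(M,N)=c(M,R_c(N))$. Your handling of the right-hand side---using the spanning theorem to replace all of $\SI_{r_c(\beta)}^{\sigma_\alpha}(Q)$ by the $c^M$, then noting that pullback along the surjection $R_c\colon\Rep_\beta(Q_c)\twoheadrightarrow R_c(\Rep_\beta(Q_c))$ is injective on functions---is the right way to match the paper's definition, and your parenthetical remark that this step is unnecessary if one defines $\SI_{R_c(\beta)}^{\sigma_\alpha}(Q)$ directly as the span of the restricted $c^M$ is apt.
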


In general, we do not know a simple method to compute the dimension of $\SI_\alpha^{\sigma_{R_c(\beta)}^\vee}(Q)$.
Sometimes, it is easier to perform computation on $Q_c$ using the theorem below.
To prove the theorem, we need some construction related to the functor $T_c$.
We can algebraically lift $T_c$ as we did for $R_c$. Moreover, the lifting can be constructed at the level of morphisms.
For our purpose, we only state such a lifting for morphisms between projectives.
It is enough to do this for $P_{v} \xrightarrow{a} P_{u}$, where $P_u, P_v$ are indecomposable projective representations corresponding to $u,v\in Q_0$, and $a$ is an arrow $u\to v$.
The construction will depend on the lifting of $R_c$.
Recall that a lifting of $R_c$ maps a representation $N$ of $Q_c$ to a representation $M$ of $Q$ as follows.
The vector space $M_u$ attached to the vertex $u$ is
$M_u = \bigoplus_{\rho\in Q_c} \dim (V_\rho) N_{u\rho}.$
Here, by $\rho\in Q_c$ we mean that there is an idempotent in $Q_c$ corresponding to the irreducible representation $\rho$.
The linear map from the $i$-th copy of $N_{u\rho}$ to $j$-th copy of $N_{v\sigma}$ is given by substituting the arrows $b_k$ in certain linear combination $\sum_k c_k^{ij} b_k$ by corresponding matrices in $N$.

Now we let $T_c$ send $P_u$ to
$T_c(P_u) = \bigoplus_{\rho\in Q_c} \dim (V_\rho) P_{u\rho},$
and send the morphism $P_{v} \xrightarrow{a} P_{u}$ to a matrix with $\sum_k c_k^{ij} b_k$ as the $ij$-th entry.
We see from the construction that such a lifting is not only algebraic but also compatible with the adjunction in the sense that
$\Hom_Q(P_u,R_c(N))$ can be naturally identified with $\Hom_{Q_c}(T_c(P_u),N)$ such that the diagram commutes
$$\xymatrix @C=3pc {
\Hom_Q(P_u,R_c(N)) \ar[rr]^{\Hom_Q(a,R_c(N))}\ar@{=}[d]^{} && \Hom_Q(P_v,R_c(N)) \ar@{=}[d]^{}\\
\Hom_{Q_c}(T_c(P_u),N) \ar[rr]^{\Hom_{Q_c}(T_c(a),N)} && \Hom_{Q_c}(T_c(P_v),N).
}$$
We remind readers that a morphism $P_1\xrightarrow{f} P_0$ can be represented by a matrix whose entries are linear combination of paths, and applying $\Hom_Q(-,N)$ to this morphism is nothing but substituting arrows in the matrix by corresponding matrix representation in $N$.

Let $\SI_\beta^{\sigma_{T_c(\alpha)}}(Q_c)$ be the vector space spanned by semi-invariants on $\Rep_\beta(Q_c)$ of form $c^{T_c(M)}$ for $M\in\Rep_\alpha(Q)$.

By Proposition \ref{P:reciprocity}, $\dim \SI_\beta^{\sigma_{T_c(\alpha)}}(Q_c)=\dim \SI_{T_c(\alpha)}^{\sigma_\beta^\vee}(Q_c)$, where $\SI_{T_c(\alpha)}^{\sigma_\beta^\vee}(Q_c)$ is the space of restricted semi-invariants on the subvariety $T_c(\Rep_\alpha(Q))$.
\begin{theorem} \label{T:SIadj} $\dim \SI_\alpha^{\sigma_{R_c(\beta)}^\vee}(Q) = \dim \SI_\beta^{\sigma_{T_c(\alpha)}}(Q_c)$.
\end{theorem}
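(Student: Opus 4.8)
The plan is to reduce the identity of dimensions to a single regular function on the product $\Rep_\alpha(Q)\times\Rep_\beta(Q_c)$ and then run the tensor-rank argument that also underlies Proposition~\ref{P:reciprocity}. The crux is the pointwise identity
\[
c_{R_c(N)}(M)\;=\;c^{T_c(M)}(N)\qquad\text{for all }M\in\Rep_\alpha(Q),\ N\in\Rep_\beta(Q_c).
\]
To establish it, fix $M$ and a projective presentation $0\to P_1\xrightarrow{f}P_0\to M\to 0$ in $\module kQ$ (taken, as usual, from the minimal presentation of a generic $\alpha$-dimensional representation, so that $c_{R_c(N)}$ is a well-defined regular function on $\Rep_\alpha(Q)$ for every $N$). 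Since $T_c$ is exact and carries projectives to projectives --- the monoid analogue of the properties of $T$ recorded in Section~\ref{ss:functors}, and visible from the explicit lifting of $T_c$ described just before this theorem --- applying $T_c$ gives a projective presentation $0\to T_c(P_1)\xrightarrow{T_c(f)}T_c(P_0)\to T_c(M)\to 0$ in $\module kQ_c$. By definition $c_{R_c(N)}(M)=\det\Hom_Q(f,R_c(N))$ and $c^{T_c(M)}(N)=\det\Hom_{Q_c}(T_c(f),N)$, and the commutative square displayed just before the theorem identifies $\Hom_Q(P_i,R_c(N))$ with $\Hom_{Q_c}(T_c(P_i),N)$ compatibly with the maps induced by $f$ and $T_c(f)$. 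Hence these two square matrices coincide under the identification, so their determinants agree. (Both maps are square matrices simultaneously on the two sides: $T_c$ is exact with right adjoint $R_c$ and both algebras are hereditary, so $\Hom$, $\Ext$, and therefore the Euler form are preserved under the adjunction, giving $\innerprod{\alpha,r_c(\beta)}_Q=0\iff\innerprod{T_c(\alpha),\beta}_{Q_c}=0$; this common vanishing is the standing hypothesis that makes $c_{R_c(N)}$ a semi-invariant.)

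Granting the identity, set $F(M,N):=c_{R_c(N)}(M)=c^{T_c(M)}(N)$. Because $R_c\colon\Rep_\beta(Q_c)\to\Rep_{r_c(\beta)}(Q)$ and $T_c\colon\Rep_\alpha(Q)\to\Rep_{T_c(\alpha)}(Q_c)$ are algebraic morphisms and $c(-,-)$ is polynomial in matrix entries, $F$ is a regular function on $\Rep_\alpha(Q)\times\Rep_\beta(Q_c)$, i.e.\ $F\in k[\Rep_\alpha(Q)]\otimes k[\Rep_\beta(Q_c)]$. Write $F=\sum_{i=1}^r f_i\otimes g_i$ with $r$ minimal; minimality makes $\{f_i\}$ linearly independent in $k[\Rep_\alpha(Q)]$ and $\{g_i\}$ linearly independent in $k[\Rep_\beta(Q_c)]$. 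For each $N_0$ we have $c_{R_c(N_0)}=F(-,N_0)=\sum_i g_i(N_0)f_i\in\operatorname{span}_k\{f_1,\dots,f_r\}$; conversely, choosing points $N_0^{(1)},\dots,N_0^{(r)}$ with $\bigl(g_i(N_0^{(j)})\bigr)_{ij}$ invertible (possible since the $g_i$ are independent) shows each $f_i$ lies in $\operatorname{span}_k\{c_{R_c(N_0^{(j)})}\}_j$. Hence $\SI_\alpha^{\sigma_{R_c(\beta)}^\vee}(Q)=\operatorname{span}_k\{f_1,\dots,f_r\}$ has dimension $r$. By the symmetric argument (exchanging the two tensor factors and the roles of $M$ and $N$) one gets $\SI_\beta^{\sigma_{T_c(\alpha)}}(Q_c)=\operatorname{span}_k\{g_1,\dots,g_r\}$, also of dimension $r$, which is the assertion.

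The main obstacle is the pointwise identity of the first paragraph, and inside it the precise sense in which the adjunction isomorphisms $\Hom_Q(P_i,R_c(N))\cong\Hom_{Q_c}(T_c(P_i),N)$ are equalities of matrices intertwining the differentials --- not merely isomorphisms that would make the two determinants proportional up to an $N$-dependent factor. This is exactly what the morphism-level lifting of $T_c$ preceding the theorem is designed to deliver, and confirming it amounts to checking that the operation ``substitute arrows by their matrix representations'' commutes with the prescribed change of basis; once that is in place, the concluding rank argument is routine and is the one already used for Proposition~\ref{P:reciprocity} and \cite[Corollary~1]{DW1}.
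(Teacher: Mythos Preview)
Your proposal is correct and follows essentially the same route as the paper: the paper also establishes the pointwise identity $c(M,R_c(N))=c(T_c(M),N)$ by applying $\Hom_Q(-,R_c(N))$ to a projective resolution of $M$, invoking exactness and projective-preservation of $T_c$, and using the commutative adjunction square displayed just before the theorem. The only difference is that the paper passes from the pointwise identity to the dimension equality with a bare ``Therefore'', whereas you spell out the underlying tensor-rank argument (the same one behind Proposition~\ref{P:reciprocity} and \cite[Corollary~1]{DW1}); your added detail is a welcome clarification rather than a different method.
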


\begin{proof} For any two representations $M\in\Rep_\alpha(Q), N\in\Rep_\beta(Q_c)$, we
take the canonical resolution $0\to P_1\to P_0 \to M\to 0$, and apply the functor $\Hom_{Q}(-,R_c(N))$, then we get
$$\xymatrix @C=1pc {
\Hom_Q(M,R_c(N)) \ar@{^(->}[r]\ar@{=}[d] & \Hom_Q(P_0,R_c(N)) \ar[rr]^{\phi_M^{R_c(N)}}\ar@{=}[d]^{} && \Hom_Q(P_1,R_c(N)) \ar@{>>}[r] \ar@{=}[d]^{} & \Ext_Q(M,R_c(N))\ar@{=}[d]\\
\Hom_{Q_c}(T_c(M),N) \ar@{^(->}[r] & \Hom_{Q_c}(T_c(P_0),N) \ar[rr]^{\phi_{T_c(M)}^N} && \Hom_{Q_c}(T_c(P_1),N) \ar@{>>}[r] & \Ext_{Q_c}(T_c(M),N).
}$$
The lower row is due to the adjunction.
Since $T_c$ is exact and preserves projectives, $0\to T_c(P_1)\to T_c(P_0)\to T_c(M)\to 0$ is in fact a projective resolution of $T_c(M)$.
By our construction of $T_c$, we conclude that
$$c(M,R_c(N))=\det \phi_M^{R_c(N)} = \det \phi_{T_c(M)}^N=c(T_c(M),N).$$
We view both functions as regular functions on $\Rep_\beta(Q_c)$, so $\{c(M,R_c(N))\}_M$ and $\{c(T_c(M),N)\}_M$ span the same subspace.
By Proposition \ref{P:reciprocity}, the dimension of former span is equal to $\dim \SI_\alpha^{\sigma_{R_c(\beta)}^\vee}(Q)$, and the dimension of the latter span is by definition $\dim \SI_\beta^{\sigma_{T_c(\alpha)}}(Q_c)$.
Therefore, $\dim \SI_\alpha^{\sigma_{R_c(\beta)}^\vee}(Q) = \dim \SI_\beta^{\sigma_{T_c(\alpha)}}(Q_c)$.
\end{proof}

\ytableausetup{mathmode, boxsize=.3em}

As an example, let us compute the dimension of $\SI_{(1,2)}^{\sigma_{R_c(1,0,1)}^\vee}(K_3)$ in Proposition \ref{P:32}.
It is enough to compute the dimension of $\SI_{(1,0,1)}^{\sigma_{T_c(1,2)}}(Q_c)$.
This $Q_c$ is a finite type quiver, so the dimension of $\SI_{(1,0,1)}^{\sigma_{T_c(1,2)}}(Q_c)$ is at most one.
A general representation $M$ in $\Rep_{(1,2)}(K_3)$ has resolution $0\to P_2\xrightarrow{k_1a_1+k_2a_2+k_3a_3} P_1 \to M\to 0$, then
$$0\to T_c(P_2)=3P_{\ydiagram{1}}\xrightarrow{\sm{k_2b_1&-k_1b_1&0\\-k_3b_1&0&k_1b_1\\0&k_3b_1&-k_2b_1}} 3P_{\ydiagram{1,1}}=T_c(P_1) \to T_c(M)\to 0.$$
Now it is not hard to see that $T_c(M)$ decomposes as $3(M_1\oplus M_2)$, where $M_1$ (resp. $M_2$) is a general representation of dimension $(0,1,1)$ (resp. $(1,1,1)$). So we see that $\Hom_{Q_c}(T_c(M),N)=0$ for general $N\in\Rep_{(1,0,1)}(Q_c)$, and thus $\dim \SI_{(1,2)}^{\sigma_{R_c(1,0,1)}^\vee}(K_3)=1$.
In fact, $\dim \SI_{(a,2a)}^{\sigma_{R_c(1,0,1)}^\vee}(K_3)=1$ for all $a\in\mathbb{N}$.

We checked that the spaces of semi-invariants of fixed weight in Propositions \ref{P:32}, \ref{P:42}, \ref{P:33}, and \ref{P:233} are all one-dimensional by hand and by computer. This theorem also tells us that to construct nontrivial semi-invariants, it is enough to use those {\em stable} representation of $Q_c$ in the sense of \cite{K}.

\section*{Acknowledgement}
The author wants to thank Liping Li for introducing him to think in $EI$-categories. He would also like to thank Professor Doty for some discussion on Schur algebras. He is especially grateful to the anonymous referee for several important suggestions and corrections.

\bibliographystyle{amsplain}

\end{document}